\providecommand{\U}[1]{\protect\rule{.1in}{.1in}}
\newtheorem{theorem}{Theorem}
\newtheorem{definition}[theorem]{Definition}
\newtheorem{lemma}[theorem]{Lemma}
\newtheorem{remark}[theorem]{Remark}
\newenvironment{proof}[1][Proof]{\noindent\textbf{#1.} }{\ \rule{0.5em}{0.5em}}
\begin{document}

\title{On generalized Eisenstein series and Ramanujan's formula for periodic zeta-functions}
\author{M. Cihat Da\u{g}l\i\ and M\"{u}m\"{u}n Can\\Department of Mathematics, Akdeniz University, \\07058-Antalya, Turkey\\e-mails: mcihatdagli@akdeniz.edu.tr, mcan@akdeniz.edu.tr}
\date{}
\maketitle

\begin{abstract}
In this paper, transformation formulas for a large class of Eisenstein series
defined by
\[
G(z,s;A_{\alpha},B_{\beta};r_{1},r_{2})=\sum\limits_{m,n=-\infty}^{\infty
}\ \hspace{-0.19in}^{^{\prime}}\frac{f(\alpha m)f^{\ast}(\beta n)}%
{((m+r_{1})z+n+r_{2})^{s}},\text{ }\operatorname{Re}(s)>2,\text{
}\operatorname{Im}(z)>0
\]
are investigated for $s=1-r,$ $r\in\mathbb{N}$. Here $\left\{  f(n)\right\}  $
and $\left\{  f^{\ast}(n)\right\}  ,$ $-\infty<n<\infty$ are sequences of
complex numbers with period $k>0,$ and $A_{\alpha}=\left\{  f(\alpha
n)\right\}  $ and $B_{\beta}=\left\{  f^{\ast}(\beta n)\right\}  ,$
$\alpha,\beta\in\mathbb{Z}.$ Appearing in the transformation formulas are
generalizations of Dedekind sums involving the periodic Bernoulli function.
Reciprocity law is proved for periodic Apostol-Dedekind sum outside of the
context of the transformation formulas. Furthermore, transformation formulas
are presented for $G(z,s;A_{\alpha},I;r_{1},r_{2})$ and $G(z,s;I,A_{\alpha
};r_{1},r_{2}),$ where $I=\left\{  1\right\}  .$ As an application of these
formulas, analogues of Ramanujan's formula for periodic zeta-functions are derived.

\textbf{Keywords : }Eisenstein series, Zeta functions,\textbf{ }Dedekind sums,
Bernoulli numbers and polynomials.

\textbf{MSC 2010 : }11M36, 11M41, 11F20, 11B68.

\end{abstract}

\section{Introduction}

For integers $c$ and $d$ with $c>0$, the classical Dedekind sum $s(d,c),$
arising in the transformation formula of the Dedekind eta-function
$\eta\left(  z\right)  ,$ is defined by
\[
s(d,c)=\sum\limits_{n(\operatorname{mod}c)}\left(  \left(  \frac{n}{c}\right)
\right)  \left(  \left(  \frac{dn}{c}\right)  \right)
\]
where the \textit{sawtooth} function is defined by%
\[
\left(  \left(  x\right)  \right)  =%
\begin{cases}
x-\left[  x\right]  -1/2, & \text{if }x\in\mathbb{R}\backslash\mathbb{Z}%
\text{,}\\
0, & \text{if }x\in\mathbb{Z}%
\end{cases}
\]
with $[x]$\ the floor function. One of the most important properties of
Dedekind sums is the reciprocity formula \textbf{\ }%
\begin{equation}
s(d,c)+s(c,d)=-\frac{1}{4}+\frac{1}{12}\left(  \frac{d}{c}+\frac{c}{d}%
+\frac{1}{dc}\right)  \label{35}%
\end{equation}
whenever $c$ and $d$ are coprime positive integers. For several proofs of
(\ref{35}) and generalizations for Dedekind sums, for example, see
\cite{14,12,2,11,7,15,20,5,18,ham,lim,9,nos,13,sek,16}.

There are several other functions such as Eisenstein series, which possess
transformation formula similar to $\log\eta(z)$. Lewittes \cite{6} has
discovered a method of obtaining transformation formulas for certain
generalized Eisenstein series which developed by Berndt \cite{12}. Berndt's
transformation formulas contain transformation formulas previously established
by Apostol, C. Meyer, Dieter, Schoeneberg and others. By this way, Berndt
obtain transformation formula for several types of Eisenstein series involving
exponential function and primitive characters \cite{11,2,7}. Following Berndt,
Meyer \cite{9}, Sekine \cite{sek1} and Lim \cite{lim} give\textbf{
}transformation formulas for aforementioned classes of functions. Arising in
the transformation formulas are various types of Dedekind sums, all of which
satisfy reciprocity theorems.

From the principal theorem of \cite{11}, Berndt \cite{22} and Goldberg
\cite{gold}\ derived a number of transformation formulas for the logarithms of
the classical theta-functions in which Dedekind-like sums or Hardy-Berndt sums
appear. A character generalization of certain Hardy-Berndt sums have been
achieved by Can and Kurt \cite{ck} from the main theorem of \cite{2}.

One of the most interesting corollaries of \cite[Theorem 2]{12} is Ramanujan's
formula for $\zeta(2n+1),$ $n\geq1$. It is shown in \cite{8} that this and
also Euler's formula for $\zeta(2n),$ $n\geq1,$ are both consequences of
\cite[Theorem 2]{12}. Moreover, the character analogue of Ramanujan's formula
for Dirichlet $L$-function is due to Katayama \cite{kat} and is proved by
Berndt \cite{7} via transformation formulas. In \cite{brad}, Bradley has
achieved periodic analogues of Ramanujan's formula by employing the partial
fraction expansion of the hyperbolic cotangent. Katayama's and Berndt's
formulas are immediate consequences of Bradley's theorems.

Recently, the authors \cite{dc} have derived transformation formulas for a
very large class of Eisenstein series defined by
\begin{equation}
G(z,s;A_{\alpha},B_{\beta};r_{1},r_{2})=\sum\limits_{m,n=-\infty}^{\infty
}\ \hspace{-0.19in}^{^{\prime}}\frac{f(\alpha m)f^{\ast}(\beta n)}%
{((m+r_{1})z+n+r_{2})^{s}},\text{ }\operatorname{Re}(s)>2,\text{
}\operatorname{Im}(z)>0 \label{53}%
\end{equation}
where $\left\{  f(n)\right\}  $ and $\left\{  f^{\ast}(n)\right\}  ,$
$-\infty<n<\infty$ are sequences of complex numbers with period $k>0,$ and
$A_{\alpha}=\left\{  f(\alpha n)\right\}  $ and $B_{\beta}=\left\{  f^{\ast
}(\beta n)\right\}  ,$ $\alpha,\beta\in\mathbb{Z}.$ In (\ref{53}), the dash
$\prime$ means that the possible pair $m=-r_{1},n=-r_{2}$ is excluded from the
summation. These transformation formulas are extensions of the principal
theorems of Berndt \cite{12,2,7}. Letting $s=0$ in these transformation
formulas, generalizations of Dedekind sums (called periodic Dedekind sums)
involving the periodic Bernoulli function appear. It is shown that these sums
still obey reciprocity laws. In addition, relations between various infinite
series and evaluations of several infinite series are deduced from the
transformation formulas. Also, periodic Dedekind sums for some special values
of $A_{\alpha}$ and $B_{\beta}$ are illustrated.

In the present paper, we wish to investigate transformation formulas for
positive integer $r$ and $s=1-r$ in Theorem \ref{mainteo} (see below).
Appearing in these transformation formulas are generalizations of Dedekind
sums as%
\[
\sum\limits_{n=1}^{ck}f^{\ast}(bn)P_{m}\left(  \frac{n}{ck}\right)
P_{r+1-m}(dn/c,A_{c})
\]
where $P_{r}(x,A_{c})$ is the periodic Bernoulli function (see (\ref{pr**})).
In Section 3, we prove two reciprocity formulas. The first one is for the
function involving periodic Dedekind sums and the second is for periodic
Apostol-Dedekind sum.\textbf{ }In Section 4, we present transformation
formulas which yield analogues of Ramanujan's formula for periodic
zeta-functions. Bradley's formulas are consequences of one of our theorems and
a very special cases of an infinite class of similar formulas.

\section{Notation and preliminaries}

Let $\mathbb{H=}\left\{  z:\operatorname{Im}(z)>0\right\}  $ denote the upper
half-plane and $\mathbb{K=}\{z:\operatorname{Re}(z)>-d/c$, $\operatorname{Im}%
(z)>0\}$ denote the upper quarter--plane. We use the modular transformation
$Vz=V\left(  z\right)  =\left(  az+b\right)  /\left(  cz+d\right)  $ where
$a,$ $b,$ $c$ and $d$ are integers with $ad-bc=1$ and $c>0$. We use the
notation $\left\{  x\right\}  $ for the fractional part of $x,$ and
$\lambda_{x}$ for the characteristic function of integers. We put
$e(z)=e^{2\pi iz}$ and unless otherwise stated, we use the branch of the
argument defined by $-\pi\leq\arg z<\pi$.

Let $\left\{  f(n)\right\}  =A,$ $-\infty<n<\infty$ be sequence of complex
numbers with period $k>0.$ For $\left\vert t\right\vert <2\pi/k,$ the periodic
Bernoulli numbers and polynomials are defined by means of the generating
functions \cite{10}%
\begin{equation}
\sum\limits_{n=0}^{k-1}\frac{tf(n)e^{nt}}{e^{kt}-1}=\sum\limits_{j=0}^{\infty
}\frac{B_{j}(A)}{j!}t^{j} \label{b}%
\end{equation}
and
\begin{equation}
\sum\limits_{n=0}^{k-1}\frac{tf(-n)e^{(n+x)t}}{e^{kt}-1}=\sum\limits_{j=0}%
^{\infty}\frac{B_{j}(x,A)}{j!}t^{j}. \label{bp}%
\end{equation}
Note that, when $A=I=\left\{  1\right\}  ,$(\ref{b}) and (\ref{bp}) reduce to
ordinary Bernoulli numbers and polynomials, defined by the generating
functions \cite{1}%
\begin{align}
\frac{t}{e^{t}-1}  &  =\sum\limits_{n=0}^{\infty}B_{n}\frac{t^{n}}%
{n!},\ |t|<2\pi,\nonumber\\
\frac{te^{xt}}{e^{t}-1}  &  =\sum\limits_{n=0}^{\infty}B_{n}(x)\frac{t^{n}%
}{n!},\ |t|<2\pi, \label{8}%
\end{align}
respectively. Notice that\textbf{ }$B_{0}(x)=1,$ $B_{1}=-1/2,$ $B_{1}(1)=1/2,$
$B_{2n+1}=B_{2n-1}\left(  1/2\right)  =0,$\ $n\geq1,$\ and $B_{1}(x)=x-1/2.$

Throughout this paper, the $n-$th Bernoulli function will be denoted by
$P_{n}\left(  x\right)  $ and is defined by
\[
n!P_{n}\left(  x\right)  =B_{n}\left(  x-\left[  x\right]  \right)  .
\]
In particular $P_{1}\left(  x\right)  =x-\left[  x\right]  -1/2.$ These
functions satisfy\textbf{ }the Raabe or multiplication formula for all real
$x$
\begin{equation}
P_{n}(x)=k^{n-1}\sum\limits_{m=0}^{k-1}P_{n}\left(  \frac{m+x}{k}\right)
\label{26}%
\end{equation}
and the reflection identity $P_{n}(-x)=\left(  -1\right)  ^{n}P_{n}(x)$ except
$n=1$ and $x\in\mathbb{Z}$, in that case
\[
P_{1}(-x)=P_{1}(x)=P_{1}(0)=-1/2.
\]

The periodic Bernoulli functions $P_{r}(x,A),$ are functions with period $k$,
may be defined by \cite{10}
\[
P_{0}(x,A)=B_{0}(A)=\frac{1}{k}\sum\limits_{m=0}^{k-1}f(m)
\]
and%
\begin{equation}
P_{r}(x,A)=k^{r-1}\sum\limits_{v=0}^{k-1}f(-v)P_{r}\left(  \frac{v+x}%
{k}\right)  ,\text{ }r\geq1 \label{10-1}%
\end{equation}
for all real $x$. Also, the periodic Bernoulli function $P_{r}(x,A_{c})$ is
defined in \cite{dc} by%
\begin{equation}
P_{r}(x,A_{c})=k^{r-1}\sum\limits_{v=0}^{k-1}f(-cv)P_{r}\left(  \frac{v+x}%
{k}\right)  . \label{pr**}%
\end{equation}

Define the sequence $\widehat{A}=\left\{  \widehat{f}(n)\right\}  $ by
\begin{equation}
\widehat{f}(n)=\frac{1}{k}\sum\limits_{j=0}^{k-1}f\left(  j\right)  e\left(
-nj/k\right)  \label{47}%
\end{equation}
for $-n<\infty<n.$ These are the finite Fourier series coefficients of
$\left\{  f(n)\right\}  .$ Clearly $\widehat{A}$ also has period $k.$ Note
that (\ref{47}) holds if and only if%
\begin{equation}
f(n)=\sum\limits_{j=0}^{k-1}\widehat{f}\left(  j\right)  e\left(  nj/k\right)
,\text{ \ }-n<\infty<n.\text{\ } \label{2}%
\end{equation}
We need the following Lemma and Theorems given in \cite{dc}.

\begin{lemma}
\label{lem}Let $z\in\mathbb{H}$, $\operatorname{Re}(s)>2$ and $\beta\beta
^{-1}\equiv1(\operatorname{mod}k).$ Then,
\begin{align}
&  \Gamma(s)G(z,s;A_{\alpha},B_{\beta};r_{1},r_{2})\nonumber\\
&  =\left(  -2\pi i/k\right)  ^{s}k\left(  A\left(  z,s;A_{\alpha}%
,\widehat{B}_{-\beta^{-1}};r_{1}{\huge ,}r_{2}\right)  +e(s/2)A\left(
z,s;A_{-\alpha},\widehat{B}_{\beta^{-1}};-r_{1}{\LARGE ,}-r_{2}\right)
\right) \nonumber\\
&  \quad+\lambda_{r_{1}}f(-\alpha r_{1})\Gamma(s)\left(  L(s;B_{\beta}%
;r_{2})+e(s/2)L(s;B_{-\beta};-r_{2})\right)  \label{11}%
\end{align}
where%
\begin{equation}
A(z,s;A_{\alpha},A_{\beta};r_{1}{\huge ,}r_{2})=\sum\limits_{m>-r_{1}}f(\alpha
m)\sum\limits_{n=1}^{\infty}f(\beta n)e\left(  n\frac{(m+r_{1})z+r_{2}}%
{k}\right)  n^{s-1} \label{25}%
\end{equation}
and%
\begin{equation}
L(s;A_{\beta};\theta)=\sum\limits_{n>-\theta}f(n\beta)(n+\theta)^{-s},\text{
for }\operatorname{Re}\left(  s\right)  >1\text{ and }\theta\text{ real.}
\label{12}%
\end{equation}

\end{lemma}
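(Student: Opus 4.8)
The plan is to split the double series (\ref{53}) according to the sign of $m+r_{1}$ and to evaluate each inner sum over $n$ by the Lipschitz summation formula. Write $G(z,s;A_{\alpha},B_{\beta};r_{1},r_{2})=G_{+}+G_{0}+G_{-}$, where $G_{+}$, respectively $G_{-}$, collects the terms with $m+r_{1}>0$, respectively $m+r_{1}<0$, and $G_{0}$ is the single term $m=-r_{1}$, which is present precisely when $r_{1}\in\mathbb{Z}$ and therefore carries the factor $\lambda_{r_{1}}f(-\alpha r_{1})$. Because $\operatorname{Re}(s)>2$ the series (\ref{53}) converges absolutely, so every rearrangement below is permissible.

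For $G_{0}$ the summand reduces to $\lambda_{r_{1}}f(-\alpha r_{1})f^{\ast}(\beta n)(n+r_{2})^{-s}$ with $n\neq-r_{2}$. Splitting the range into $n>-r_{2}$ and $n<-r_{2}$, and replacing $n$ by $-n$ in the latter, produces $\lambda_{r_{1}}f(-\alpha r_{1})(L(s;B_{\beta};r_{2})+e(s/2)L(s;B_{-\beta};-r_{2}))$ in the notation of (\ref{12}); the factor $e(s/2)$ arises from $(-x)^{-s}=e(s/2)x^{-s}$ for $x>0$, which is exactly what the branch $-\pi\leq\arg z<\pi$ prescribes. After multiplication by $\Gamma(s)$ this is the last line of (\ref{11}).

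For $G_{+}$, fix $m>-r_{1}$ and set $w=(m+r_{1})z+r_{2}$, so $\operatorname{Im}(w)>0$. Since $f^{\ast}(\beta n)$ has period $k$ in $n$, split $\sum_{n\in\mathbb{Z}}f^{\ast}(\beta n)(w+n)^{-s}$ into residue classes $n\equiv\rho\,(\operatorname{mod}k)$, $0\leq\rho<k$, and apply to each class the Lipschitz formula $\sum_{\nu\in\mathbb{Z}}\left(\frac{w+\rho}{k}+\nu\right)^{-s}=\frac{(-2\pi i)^{s}}{\Gamma(s)}\sum_{\ell=1}^{\infty}\ell^{s-1}e\!\left(\ell\frac{w+\rho}{k}\right)$, valid for $\operatorname{Re}(s)>1$. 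Collecting the $\rho$-sum and invoking the finite Fourier relations (\ref{47})--(\ref{2}) together with the substitution $\rho\mapsto\beta^{-1}\rho$ — a bijection of $\mathbb{Z}/k\mathbb{Z}$ since $\gcd(\beta,k)=1$ — converts the character sum $\sum_{\rho=0}^{k-1}f^{\ast}(\beta\rho)e(\ell\rho/k)$ into $k\,\widehat{f^{\ast}}(-\beta^{-1}\ell)$, whence $\sum_{n}f^{\ast}(\beta n)(w+n)^{-s}=k\frac{(-2\pi i/k)^{s}}{\Gamma(s)}\sum_{\ell=1}^{\infty}\widehat{f^{\ast}}(-\beta^{-1}\ell)\,\ell^{s-1}e(\ell w/k)$. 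Multiplying by $f(\alpha m)$ and summing over $m>-r_{1}$ gives exactly $\frac{(-2\pi i/k)^{s}}{\Gamma(s)}\,k\,A(z,s;A_{\alpha},\widehat{B}_{-\beta^{-1}};r_{1},r_{2})$ with $A$ as in (\ref{25}).

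Finally, in $G_{-}$ replace $(m,n)$ by $(-m,-n)$: the range becomes $m>r_{1}$, the base becomes $-((m-r_{1})z+(n-r_{2}))$ with argument in $(-\pi,0)$, so again $(-\zeta)^{-s}=e(s/2)\zeta^{-s}$, and $G_{-}$ equals $e(s/2)$ times a series of the same form as $G_{+}$ but with $(\alpha,\beta,r_{1},r_{2})$ replaced by $(-\alpha,-\beta,-r_{1},-r_{2})$. Running the computation of the previous paragraph and using $(-\beta)^{-1}\equiv-\beta^{-1}\,(\operatorname{mod}k)$ (so that the character there becomes $\widehat{f^{\ast}}(\beta^{-1}\ell)$) yields $e(s/2)\frac{(-2\pi i/k)^{s}}{\Gamma(s)}\,k\,A(z,s;A_{-\alpha},\widehat{B}_{\beta^{-1}};-r_{1},-r_{2})$; multiplying $G=G_{+}+G_{0}+G_{-}$ by $\Gamma(s)$ and adding the three pieces produces (\ref{11}). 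The step to get right is the (twisted) Lipschitz summation formula together with the consistent use of the branch $-\pi\leq\arg z<\pi$ that generates the factors $e(s/2)$; the multiplicative reindexing $\rho\mapsto\beta^{-1}\rho$, which is where the hypothesis $\beta\beta^{-1}\equiv1\,(\operatorname{mod}k)$ and the sequences $\widehat{B}_{\mp\beta^{-1}}$ enter, is a secondary bookkeeping matter.
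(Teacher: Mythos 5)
Your argument is correct, and it is the standard Lewittes--Berndt route that the paper itself relies on: note that this paper does not prove Lemma \ref{lem} at all but imports it from the authors' earlier preprint \cite{dc}, whose proof proceeds exactly as you do --- split on the sign of $m+r_{1}$, handle the $m=-r_{1}$ term via the branch convention to produce $\lambda_{r_{1}}f(-\alpha r_{1})\left(L(s;B_{\beta};r_{2})+e(s/2)L(s;B_{-\beta};-r_{2})\right)$, and evaluate the inner $n$-sums by the Lipschitz formula combined with the finite Fourier inversion (\ref{47})--(\ref{2}), the reindexing $\rho\mapsto\beta^{-1}\rho$ being where $\beta\beta^{-1}\equiv1\ (\operatorname{mod}k)$ enters. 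Your bookkeeping of the factors $e(s/2)$ from $(-\zeta)^{-s}=e(s/2)\zeta^{-s}$ under the branch $-\pi\leq\arg z<\pi$, and of the constant $(-2\pi i/k)^{s}k$, matches the stated identity, so the proposal is a complete proof in essentially the same spirit as the source.
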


Note that since $L(s;B_{\beta};r_{2})$ can be analytically continued to the
entire $s$--plane with the possible exception $s=1$ and since $A\left(
z,s;A_{\alpha},B_{\beta};r_{1}{\huge ,}r_{2}\right)  $ is entire function of
$s,$ $G(z,s;A_{\alpha},B_{\beta};r_{1},r_{2})$ can be analytically continued
to the entire $s$--plane with the possible exception $s=1$.

\begin{theorem}
\label{mainteo} Define $R_{1}=ar_{1}+cr_{2}${\huge \ }and $R_{2}=br_{1}%
+dr_{2},$ in which $r_{1}$ and $r_{2}$ are arbitrary real numbers. Let
$\rho=\rho\left(  R_{1},R_{2},c,d\right)  =\left\{  R_{2}\right\}  c-\left\{
R_{1}\right\}  d.$ Suppose first that $a\equiv d\equiv0\left(
\operatorname{mod}k\right)  .$ Then for $z\in\mathbb{K}$ and all $s,$
\begin{align}
&  (cz+d)^{-s}\Gamma(s)G(Vz,s;A,B;r_{1},r_{2})\label{13-a}\\
&  =\Gamma(s)G(z,s;B_{-b},A_{-c};R_{1},R_{2})-2i\Gamma(s)\sin(\pi
s)L(s;A_{c};-R_{2})f^{\ast}(bR_{1})\lambda_{R_{1}}\nonumber\\
&  \quad+e(-s/2)\sum\limits_{j=1}^{c}\sum\limits_{\mu=0}^{k-1}\sum
\limits_{v=0}^{k-1}f(c(\left[  R_{2}+d(j-\left\{  R_{1}\right\}  )/c\right]
-v))f^{\ast}(b(\mu c+j+\left[  R_{1}\right]  ))\nonumber\\
&  \qquad\qquad\times I(z,s,c,d,r_{1},r_{2})\nonumber
\end{align}
where $L(s;A_{c};R_{2})$ is given by (\ref{12}) and
\begin{align}
&  I(z,s,c,d,r_{1},r_{2})\nonumber\\
&  \ =\int\limits_{C}u^{s-1}\frac{\exp(-\left(  (c\mu+j-\left\{
R_{1}\right\}  )/ck\right)  (cz+d)ku)}{\exp(-ku(cz+d))-1}\frac{\exp((\left(
v+\left\{  (dj+\rho)/c\right\}  \right)  /k)ku)}{\exp(ku)-1}du. \label{int}%
\end{align}
Here, we choose the branch of $u^{s}$ with $0<\arg u<2\pi$. Also, $C$ is a
loop beginning at $+\infty$, proceeding in the upper half-plane, encircling
the origin in the positive direction so that $u=0$ is the only zero of
$(\exp(-ku(cz+d))-1)(\exp\left(  ku\right)  -1)$ lying "inside" the loop, and
then returning to $+\infty$ in the lower half-plane.

Secondly, if $b\equiv c\equiv0\left(  \operatorname{mod}k\right)  ,$ then for
$z\in\mathbb{K}$ and all $s,$%
\begin{align}
&  (cz+d)^{-s}\Gamma(s)G(Vz,s;A,B;r_{1},r_{2})\label{13}\\
&  =\Gamma(s)G(z,s;A_{d},B_{a};R_{1},R_{2})-2i\Gamma(s)\sin(\pi s)L(s;B_{-a}%
;-R_{2})f(-dR_{1})\lambda_{R_{1}}\nonumber\\
&  \quad+e(-s/2)\sum\limits_{j=1}^{c}\sum\limits_{\mu=0}^{k-1}\sum
\limits_{v=0}^{k-1}f^{\ast}(-a(\left[  R_{2}+d(j-\left\{  R_{1}\right\}
)/c\right]  -v+d\mu))f(-d(j+\left[  R_{1}\right]  ))I(z,s,c,d,r_{1}%
,r_{2}).\nonumber
\end{align}

\end{theorem}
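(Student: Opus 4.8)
The plan is to derive the two transformation formulas in Theorem~\ref{mainteo} from Lemma~\ref{lem} by relating $G(Vz,s;A,B;r_1,r_2)$ back to a series in $z$. First I would start from the defining series~(\ref{53}) for $G(Vz,s;A,B;r_1,r_2)$, substitute $Vz=(az+b)/(cz+d)$, and clear the denominator $cz+d$: writing $(m+r_1)Vz+n+r_2=\bigl((m+r_1)(az+b)+(n+r_2)(cz+d)\bigr)/(cz+d)$, the numerator becomes $\bigl(a(m+r_1)+c(n+r_2)\bigr)z+\bigl(b(m+r_1)+d(n+r_2)\bigr)$. Making the unimodular change of indices $m'=am+cn$, $n'=bm+dn$ (which is a bijection of $\mathbb{Z}^2$ since $ad-bc=1$) turns this into $(m'+R_1)z+(n'+R_2)$ with $R_1=ar_1+cr_2$, $R_2=br_1+dr_2$ as defined. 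The periodicity hypotheses — $a\equiv d\equiv0\ (\mathrm{mod}\ k)$ in the first case, $b\equiv c\equiv0\ (\mathrm{mod}\ k)$ in the second — are exactly what is needed so that $f(\alpha m)$ and $f^*(\beta n)$ can be re-expressed in terms of the new indices $m',n'$ modulo $k$; e.g. when $k\mid a$ and $k\mid d$ we have $m\equiv d m' - c n'$ ... (using $ad-bc=1$), so $f(m)$ depends on $m',n'$ through residues involving $d$ and $c$, producing the sequences $B_{-b},A_{-c}$ (resp.\ $A_d,B_a$) that appear on the right.

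Next I would feed the resulting $z$-series into Lemma~\ref{lem} applied with the new data, so that $\Gamma(s)G(z,s;\cdot,\cdot;R_1,R_2)$ is written via the $A(z,s;\cdots)$ functions of~(\ref{25}) and the $L$-functions of~(\ref{12}). The term $-2i\Gamma(s)\sin(\pi s)L(s;A_c;-R_2)f^*(bR_1)\lambda_{R_1}$ (resp.\ the analogous term in~(\ref{13})) should emerge from the $\lambda_{r_1}$-term in~(\ref{11}) together with the reflection $e(s/2)$ contributions, using $2i\sin(\pi s)=e(s/2)-e(-s/2)$ up to the standard normalization; the characteristic function $\lambda_{R_1}$ tracks whether the excluded lattice point survives the transformation. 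The double/triple exponential sums in~(\ref{13-a}) and~(\ref{13}) are then obtained by expanding $f$ and $f^*$ through finite Fourier series or, more directly, by splitting the new summation indices $m',n'$ into residue classes $j\ (\mathrm{mod}\ c)$ and $\mu,v\ (\mathrm{mod}\ k)$: the index $j$ runs over $1,\dots,c$ because reducing $(m'+R_1)z+(n'+R_2)$ modulo the lattice generated by $(cz+d)$ forces a $c$-fold partition, and the floor terms $[R_2+d(j-\{R_1\})/c]$ and $[R_1]$ are precisely the integer parts that appear when one writes $m'=c\mu+j+[R_1]$ and solves for the companion index. The quantity $\rho=\{R_2\}c-\{R_1\}d$ and the fractional part $\{(dj+\rho)/c\}$ bookkeep the non-integer shift that is left over after this partition.

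Finally, the contour integral $I(z,s,c,d,r_1,r_2)$ in~(\ref{int}) arises from a Hankel-type integral representation of the remaining finite geometric-type sums in $z$: after the residue-class decomposition, the inner sum over the "block'' index $\mu$ (or the complementary index) is a geometric series whose value is a ratio of the form $\exp(\cdots)/(\exp(-ku(cz+d))-1)$, and the sum over $v$ likewise contributes $\exp(\cdots)/(\exp(ku)-1)$; multiplying by $u^{s-1}$ and integrating over the loop $C$ reproduces, via the classical Hankel formula $\Gamma(s)^{-1}\int_C u^{s-1}e^{-xu}\,du$-type identities, the analytic continuation to all $s$ of the originally $\mathrm{Re}(s)>2$ series. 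The branch choices $0<\arg u<2\pi$ and $-\pi\le\arg z<\pi$ must be tracked carefully so that the $e(\pm s/2)$ factors come out with the stated signs. The main obstacle I anticipate is the bookkeeping in the index substitution and residue-class splitting: verifying that the double change of variables $m\mapsto(m',n')$ interacts correctly with the period-$k$ structure under the congruence hypotheses on $a,b,c,d$, that the floor/fractional-part terms land exactly as written (rather than off by one), and that the two cases are genuinely parallel under the symmetry swapping $(a,b)\leftrightarrow(d,-c)$ and $A\leftrightarrow B$. The contour-integral manipulation and the identification of the $\lambda_{R_1}$ boundary term are comparatively routine once the combinatorial skeleton is in place.
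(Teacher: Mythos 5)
Your outline follows essentially the same route as the paper's proof (which is imported from the authors' earlier work \cite{dc} and is the Lewittes--Berndt method): the unimodular index change $M=am+cn$, $N=bm+dn$ with the congruence hypotheses reducing $f(m)f^{\ast}(n)$ to $f^{\ast}(-bM)f(-cN)$ (resp.\ $f(dM)f^{\ast}(aN)$), followed by Lemma~\ref{lem}, the residue-class splitting $j\ (\mathrm{mod}\ c)$, $\mu,v\ (\mathrm{mod}\ k)$, geometric summation into the two $\exp(\cdot)/(\exp(\cdot)-1)$ factors, and the Hankel loop for the continuation in $s$. This matches the paper's approach; the only point to keep front and center in a full write-up is the branch bookkeeping for $\{w/(cz+d)\}^{-s}$ versus $w^{-s}(cz+d)^{s}$, which is where the $e(-s/2)$ and $\sin(\pi s)$ factors actually originate, and which you correctly flag as the delicate step.
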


\begin{theorem}
Under the conditions of Theorem \ref{mainteo}, for $a\equiv d\equiv0\left(
\operatorname{mod}k\right)  $ we have%
\begin{align}
&  (cz+d)^{-s}\Gamma(s)G(Vz,s;B_{-\beta},A_{-\alpha};r_{1},r_{2})\label{34}\\
&  =\Gamma(s)G(z,s;A_{\alpha b},B_{\beta c};R_{1},R_{2})-2i\Gamma(s)\sin(\pi
s)f(-\alpha bR_{1})L(s;B_{-\beta c};-R_{2})\nonumber\\
&  +e(-s/2)\sum\limits_{j=1}^{c}\sum\limits_{\mu=0}^{k-1}\sum\limits_{v=0}%
^{k-1}f(-\alpha b(\mu c+j+\left[  R_{1}\right]  ))f^{\ast}(-\beta c(\left[
R_{2}+d(j-\left\{  R_{1}\right\}  )/c\right]  -v))\nonumber\\
&  \qquad\times I(z,s,c,d,r_{1},r_{2}),\nonumber
\end{align}
where $I(z,s,c,d,r_{1},r_{2})$ is given by (\ref{int}).
\end{theorem}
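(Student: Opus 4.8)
The plan is to obtain (\ref{34}) as a direct corollary of the first assertion of Theorem~\ref{mainteo}, specialised to dilated sequences, rather than by repeating the contour-integral argument. First I would record that, since $\{f(n)\}$ and $\{f^{\ast}(n)\}$ have period $k$, the dilated sequences $B_{-\beta}=\{f^{\ast}(-\beta n)\}$ and $A_{-\alpha}=\{f(-\alpha n)\}$ again have period $k$ (a proper divisor of $k$ may serve as a period, but $k$ always works, and one checks from (\ref{b}), (\ref{47}), (\ref{25}) and (\ref{12}) that the periodic Bernoulli numbers, the finite Fourier coefficients and the auxiliary series $A(\cdot)$, $L(\cdot)$ are unchanged if one enlarges the period to a multiple). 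This observation is what licenses feeding $B_{-\beta}$ and $A_{-\alpha}$ into Theorem~\ref{mainteo} in place of the pair $A,B$.

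Next I would apply formula (\ref{13-a}) with the first input sequence $A$ replaced by $B_{-\beta}$ and the second input sequence $B$ replaced by $A_{-\alpha}$. The hypothesis $a\equiv d\equiv 0\ (\operatorname{mod}k)$ is unaffected, and the quantities $R_{1},R_{2},\rho$ together with the kernel $I(z,s,c,d,r_{1},r_{2})$ in (\ref{int}) depend only on $a,b,c,d,r_{1},r_{2}$, hence are untouched by the substitution. The remaining task is purely notational: translate each term on the right of (\ref{13-a}) back into the symbols $f,f^{\ast},\alpha,\beta$. Using the composition rule $(C_{\gamma})_{\delta}=C_{\gamma\delta}$ for dilations, the main term $\Gamma(s)G(z,s;B_{-b},A_{-c};R_{1},R_{2})$ turns into $\Gamma(s)G(z,s;(A_{-\alpha})_{-b},(B_{-\beta})_{-c};R_{1},R_{2})=\Gamma(s)G(z,s;A_{\alpha b},B_{\beta c};R_{1},R_{2})$; the factor $L(s;A_{c};-R_{2})$ becomes $L(s;(B_{-\beta})_{c};-R_{2})=L(s;B_{-\beta c};-R_{2})$; the factor $f^{\ast}(bR_{1})$, coming from the second sequence, becomes $f(-\alpha bR_{1})$; and in the triple sum the factor $f(c([R_{2}+d(j-\{R_{1}\})/c]-v))$, coming from the first sequence, becomes $f^{\ast}(-\beta c([R_{2}+d(j-\{R_{1}\})/c]-v))$, while $f^{\ast}(b(\mu c+j+[R_{1}]))$ becomes $f(-\alpha b(\mu c+j+[R_{1}]))$. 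Collecting everything reproduces (\ref{34}) verbatim, and the left side $(cz+d)^{-s}\Gamma(s)G(Vz,s;B_{-\beta},A_{-\alpha};r_{1},r_{2})$ is exactly (\ref{13-a}) for the substituted sequences.

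The step I expect to require the most care is this last bookkeeping: one must keep straight which of the two interchanged sequences each occurrence of $f$ or $f^{\ast}$ in (\ref{13-a}) refers to, and compose the dilation indices $-\alpha,-\beta$ with $-b,-c,c$ in the correct order, since a single sign slip would corrupt all three terms simultaneously. There is, by contrast, no analytic difficulty, because convergence, analytic continuation in $s$, and the evaluation of $I(z,s,c,d,r_{1},r_{2})$ are all inherited from Theorem~\ref{mainteo}. If one wanted the argument to be fully self-contained it would also be worth spelling out the period-enlargement remark made above, as that is the only ingredient not literally present in the statement of Theorem~\ref{mainteo}.
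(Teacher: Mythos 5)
Your reduction is correct, and it is worth noting that the paper you are working from does not actually prove this statement at all: the theorem containing (\ref{34}) is imported verbatim, together with Lemma \ref{lem} and Theorem \ref{mainteo}, from the companion preprint \cite{dc}, where the general-dilation case is handled through Lemma \ref{lem} (note the hypothesis $\beta\beta^{-1}\equiv 1\ (\operatorname{mod}k)$ there) and the same contour-integral machinery. Your observation that (\ref{34}) is nothing more than (\ref{13-a}) applied to the base sequences $B_{-\beta}$ and $A_{-\alpha}$ is sound: these sequences still admit $k$ as a period, the data $R_{1},R_{2},\rho$ and the kernel $I(z,s,c,d,r_{1},r_{2})$ depend only on $a,b,c,d,k,r_{1},r_{2}$, and the composition rule $(C_{\gamma})_{\delta}=C_{\gamma\delta}$ turns $G(z,s;B_{-b},A_{-c};R_{1},R_{2})$ into $G(z,s;(A_{-\alpha})_{-b},(B_{-\beta})_{-c};R_{1},R_{2})=G(z,s;A_{\alpha b},B_{\beta c};R_{1},R_{2})$ and the remaining factors into exactly those displayed in (\ref{34}). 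What your route buys is that no invertibility of $\alpha$ or $\beta$ modulo $k$ is ever invoked, whereas a direct derivation through Lemma \ref{lem} would drag in the condition on $\beta$. Two small points to flag: your substitution carries along the factor $\lambda_{R_{1}}$ from (\ref{13-a}), which is silently dropped in the statement of (\ref{34}) (harmless, since $f(-\alpha bR_{1})$ is only meaningful for integral $R_{1}$, but you should say so rather than claim the match is ``verbatim''); and you should state explicitly that $G(z,s;A_{\alpha},B_{\beta};r_{1},r_{2})$ coincides with $G(z,s;A'_{1},B'_{1};r_{1},r_{2})$ for $A'=A_{\alpha}$, $B'=B_{\beta}$, which is the definitional identity that makes the whole relabelling legitimate.
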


For simplicity, the function $G(z,s;A_{\alpha},B_{\beta};0,0)$\ will be
denoted by $G(z,s;A_{\alpha},B_{\beta})$ and $G(z,s;A_{1},$ $B_{1};r_{1}%
,r_{2})=G\left(  z,s;A,B;r_{1},r_{2}\right)  $.

\section{The periodic analogue of Dedekind sum}

Theorem \ref{mainteo} can be simplified when $s=1-r$ is an integer for
$r\geq1$. In \cite{dc}, we investigate Theorem \ref{mainteo} for the case
$r=1$ and$\ r_{1},r_{2}\in\mathbb{R}$ in detail. Therefore, in this section we
assume that $r>1$ and $r_{1}=r_{2}=0.$ In this case, by the residue theorem,
$I(z,1-r,c,d,0,0)$ becomes with the aid of (\ref{8})%

\begin{align}
&  I(z,1-r,c,d,0,0)\label{30}\\
&  =\frac{2\pi ik^{r-1}}{\left(  r+1\right)  !}\sum\limits_{m=0}^{r+1}%
\binom{r+1}{m}\left(  -(cz+d)\right)  ^{m-1}B_{m}\left(  \frac{c\mu+j}%
{ck}\right)  B_{r+1-m}\left(  \frac{v+\left\{  dj/c\right\}  }{k}\right)
.\nonumber
\end{align}
Let $a\equiv d\equiv0\left(  \operatorname{mod}k\right)  .$ Substituting
(\ref{30}) in (\ref{13-a}) gives
\begin{align}
&  \lim_{s\rightarrow1-r}\left(  (cz+d)^{-s}\Gamma(s)G(Vz,s;A,B)-\Gamma
(s)G(z,s;B_{-b},A_{-c})\right) \nonumber\\
&  =-2if^{\ast}(0)\lim_{s\rightarrow1-r}\Gamma(s)\sin(\pi s)L(s,A_{c}%
;0)\nonumber\\
&  \quad+\left(  -1\right)  ^{r-1}\frac{2\pi ik^{r-1}}{\left(  r+1\right)
!}\sum\limits_{m=0}^{r+1}\binom{r+1}{m}\left(  -(cz+d)\right)  ^{m-1}%
\nonumber\\
&  \quad\times\sum\limits_{j=1}^{c}\sum\limits_{\mu=0}^{k-1}\sum
\limits_{v=0}^{k-1}f(c(\left[  dj/c\right]  -v))f^{\ast}(b(\mu c+j))B_{m}%
\left(  \frac{c\mu+j}{ck}\right)  B_{r+1-m}\left(  \frac{v+\left\{
dj/c\right\}  }{k}\right)  . \label{31}%
\end{align}
We must evaluate the triple sum in (\ref{31}). We first note that the triple
sum is invariant by replacing $B_{m}\left(  \frac{c\mu+j}{ck}\right)  $ by
$m!P_{m}\left(  \frac{c\mu+j}{ck}\right)  $ for $1\leq j\leq c-1$ and
$B_{r+1-m}\left(  \frac{v+\left\{  dj/c\right\}  }{k}\right)  $ by $\left(
r+1-m\right)  !P_{r+1-m}\left(  \frac{v+\left\{  dj/c\right\}  }{k}\right)  $.
Also replacing $v-\left[  dj/c\right]  $ by $v$ yields
\begin{align*}
&  \sum\limits_{j=1}^{c}\sum\limits_{\mu=0}^{k-1}\sum\limits_{v=0}%
^{k-1}f(c(\left[  dj/c\right]  -v))f^{\ast}(b(\mu c+j))B_{m}\left(  \frac
{c\mu+j}{ck}\right)  B_{r+1-m}\left(  \frac{v+\left\{  dj/c\right\}  }%
{k}\right) \\
&  =m!\left(  r+1-m\right)  !\sum\limits_{j=1}^{c-1}\sum\limits_{\mu=0}%
^{k-1}f^{\ast}(b(\mu c+j))P_{m}\left(  \frac{c\mu+j}{ck}\right)
\sum\limits_{v=0}^{k-1}f(-cv))P_{r+1-m}\left(  \frac{v+dj/c}{k}\right) \\
&  \quad+\left(  r+1-m\right)  !\sum\limits_{\mu=0}^{k-1}f^{\ast}%
(bc(\mu+1))B_{m}\left(  \frac{\mu+1}{k}\right)  \sum\limits_{v=0}%
^{k-1}f(-cv)P_{r+1-m}\left(  \frac{v}{k}\right)  .
\end{align*}
Here adding and subtracting\textbf{ }the term\textbf{ }$j=c$ and
writing\textbf{ }$\mu c+j=n,$ the right-hand side\textbf{ }becomes
\begin{align*}
&  m!\left(  r+1-m\right)  !k^{m-r}\sum\limits_{n=1}^{ck}f^{\ast}%
(bn)P_{m}\left(  \frac{n}{ck}\right)  P_{r+1-m}\left(  \frac{dn}{c}%
,A_{c}\right) \\
&  \quad+\left(  r+1-m\right)  !k^{m-r}f^{\ast}(0)P_{r+1-m}\left(
0,A_{c}\right)  \left(  B_{m}\left(  1\right)  -m!P_{m}\left(  0\right)
\right) \\
&  =m!\left(  r+1-m\right)  !k^{m-r}\sum\limits_{n=1}^{ck}f^{\ast}%
(bn)P_{m}\left(  \frac{n}{ck}\right)  P_{r+1-m}\left(  \frac{dn}{c}%
,A_{c}\right) \\
&  \quad+r!k^{1-r}f^{\ast}(0)P_{r}\left(  0,A_{c}\right)  \text{.}%
\end{align*}
Then, we can give the following definition.

\begin{definition}
Let $ad-bc=1$ and $a\equiv d\equiv0$ $\left(  \operatorname{mod}k\right)  .$
The periodic Dedekind sum $s_{r+1-m,m}\left(  d,c;B_{b};A_{c}\right)  $ is
defined by%
\[
s_{r+1-m,m}\left(  d,c;B_{b};A_{c}\right)  =\sum\limits_{n=1}^{ck}f^{\ast
}(bn)P_{m}\left(  \frac{n}{ck}\right)  P_{r+1-m}(dn/c,A_{c}).
\]

\end{definition}

On the other hand, using the Euler's reflection formula $\Gamma(s)\Gamma
(1-s)=\pi/\sin\left(  \pi s\right)  $ and the fact \cite[Corollary 6.4 for
$a=0$]{10}, we can write
\begin{equation}
\lim_{s\rightarrow1-r}\Gamma(s)\sin(\pi s)L(s,A_{c};0)=\pi\left(  -1\right)
^{r-1}P_{r}(0,A_{c}). \label{55}%
\end{equation}
\textbf{ }

Thus, we have
\begin{align*}
&  \lim_{s\rightarrow1-r}\Gamma(s)\left(  (cz+d)^{-s}G(Vz,s;A,B)-\frac{{}}{{}%
}G(z,s;B_{-b},A_{-c})\right) \\
&  =\left(  -1\right)  ^{r-1}2\pi i\sum\limits_{m=0}^{r+1}k^{m-1}\left(
-(cz+d)\right)  ^{m-1}s_{r+1-m,m}\left(  d,c;B_{b};A_{c}\right)  .
\end{align*}

For $b\equiv c\equiv0\left(  \operatorname{mod}k\right)  ,$ by similar
arguments, one can hold%
\begin{align*}
&  \lim_{s\rightarrow1-r}\Gamma(s)\left(  (cz+d)^{-s}G(Vz,s;A,B)-\frac{{}}{{}%
}G(z,s;A_{d},B_{a})\right) \\
&  =\left(  -1\right)  ^{r-1}2\pi i\sum\limits_{m=0}^{r+1}k^{m-1}\left(
-(cz+d)\right)  ^{m-1}\sum\limits_{n=1}^{ck}f(-dn)P_{m}\left(  \frac{n}%
{ck}\right)  P_{r+1-m}(dn/c,B_{-a}).
\end{align*}

\begin{definition}
Let $ad-bc=1$ and $b\equiv c\equiv0$ $\left(  \operatorname{mod}k\right)
.$\ The periodic Dedekind sum $s_{r+1-m,m}\left(  d,c;A_{-d};B_{-a}\right)  $
is defined by%
\[
s_{r+1-m,m}\left(  d,c;A_{-d};B_{-a}\right)  =\sum\limits_{n=1}^{ck}%
f(-dn)P_{m}\left(  \frac{n}{ck}\right)  P_{r+1-m}(dn/c,B_{-a}).
\]

\end{definition}

We summarize the results obtained above in the next theorem.

\begin{theorem}
Let $r>1$ be an integer and $z\in\mathbb{H}$. If\textbf{\ }$a\equiv
d\equiv0\left(  \operatorname{mod}k\right)  ,$ then
\begin{align}
&  \lim_{s\rightarrow1-r}\Gamma(s)\left(  (cz+d)^{-s}G(Vz,s;A,B)-\frac{{}}{{}%
}G(z,s;B_{-b},A_{-c})\right) \nonumber\\
&  =2\pi i\left(  -1\right)  ^{r-1}\sum\limits_{m=0}^{r+1}k^{m-1}\left(
-(cz+d)\right)  ^{m-1}s_{r+1-m,m}\left(  d,c;B_{b};A_{c}\right)  . \label{56}%
\end{align}
If $b\equiv c\equiv0\left(  \operatorname{mod}k\right)  ,$ then
\begin{align}
&  \lim_{s\rightarrow1-r}\Gamma(s)\left(  (cz+d)^{-s}G(Vz,s;A,B)-\frac{{}}{{}%
}G(z,s;A_{d},B_{a})\right) \nonumber\\
&  =2\pi i\left(  -1\right)  ^{r-1}\sum\limits_{m=0}^{r+1}k^{m-1}\left(
-(cz+d)\right)  ^{m-1}s_{r+1-m,m}\left(  d,c;A_{-d};B_{-a}\right)  .
\label{57}%
\end{align}

\end{theorem}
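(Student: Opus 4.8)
The plan is to specialize Theorem \ref{mainteo} to $r_{1}=r_{2}=0$ and $s=1-r$ with $r>1$ an integer, to evaluate the contour integral $I$ explicitly by residues, and then to reorganize the resulting triple sum into the periodic Dedekind sums of the two definitions above. I treat the case $a\equiv d\equiv0\,(\operatorname{mod}k)$ first; the case $b\equiv c\equiv0\,(\operatorname{mod}k)$ is entirely parallel, starting from (\ref{13}) instead of (\ref{13-a}).

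First I would compute $I(z,1-r,c,d,0,0)$. Since $0<\arg u<2\pi$ and $u=0$ is the only singularity of the integrand enclosed by $C$, the integral equals $2\pi i$ times the residue at the origin of $u^{-r}$ times the product of the two factors in (\ref{int}); expanding each factor by the generating function (\ref{8}), multiplying out, and extracting the coefficient of $u^{r-1}$ pins the cross terms to $l=r+1-m$ and yields (\ref{30}). Substituting (\ref{30}) into (\ref{13-a}), using $e(-s/2)=(-1)^{r-1}$ at $s=1-r$, and noting that $r_{1}=r_{2}=0$ forces $R_{1}=R_{2}=0$, $\lambda_{R_{1}}=1$ and $f^{\ast}(bR_{1})=f^{\ast}(0)$, produces (\ref{31}).

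The heart of the argument is the evaluation of the triple sum in (\ref{31}). For $1\le j\le c-1$ one may replace $B_{m}\!\left(\frac{c\mu+j}{ck}\right)$ by $m!\,P_{m}\!\left(\frac{c\mu+j}{ck}\right)$ and $B_{r+1-m}\!\left(\frac{v+\{dj/c\}}{k}\right)$ by $(r+1-m)!\,P_{r+1-m}\!\left(\frac{v+\{dj/c\}}{k}\right)$, the only obstruction being the case $m=1$ at integer argument, which is precisely why $j=c$ must be isolated. After the index shift $v\mapsto v-[dj/c]$, adding and subtracting the $j=c$ term, and writing $n=\mu c+j$ with $1\le n\le ck$, the multiplication formula (\ref{26}) together with the definition (\ref{pr**}) of $P_{r+1-m}(\,\cdot\,,A_{c})$ collapses the sum over $v$ into $P_{r+1-m}(dn/c,A_{c})$, so that the sum over $n$ becomes $m!\,(r+1-m)!\,k^{m-r}\,s_{r+1-m,m}(d,c;B_{b};A_{c})$ plus the leftover boundary contribution $r!\,k^{1-r}f^{\ast}(0)P_{r}(0,A_{c})$. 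This last constant is then absorbed by the $L$-function term of (\ref{13-a}) via Euler's reflection formula $\Gamma(s)\Gamma(1-s)=\pi/\sin(\pi s)$ and \cite[Corollary 6.4]{10}, which together give (\ref{55}). Collecting all pieces, the factorials cancel against $\binom{r+1}{m}/(r+1)!$, the powers of $k$ combine to $k^{m-1}$, and the sign $(-1)^{r-1}$ survives, yielding (\ref{56}).

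For the case $b\equiv c\equiv0\,(\operatorname{mod}k)$ I would run the identical computation on (\ref{13}): the integral $I$ is unchanged, and after the same index shift and boundary splitting the coefficient $f^{\ast}(-a(\cdots))f(-d(\cdots))$ reorganizes into $f(-dn)P_{m}(n/ck)P_{r+1-m}(dn/c,B_{-a})$, while the $L(s;B_{-a};0)$ term handles the boundary remainder exactly as before, producing (\ref{57}). The step I expect to be the main obstacle is the bookkeeping in this triple-sum evaluation: correctly isolating the $j=c$ term (which is forced by the failure of $P_{1}=\tfrac12!^{-1}B_{1}$ at integers), matching that term against the $L$-term, and confirming that the residual constant $P_{r}(0,A_{c})$ thrown off by the multiplication formula is precisely what (\ref{55}) supplies.
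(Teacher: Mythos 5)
Your proposal follows essentially the same route as the paper: evaluate $I(z,1-r,c,d,0,0)$ by residues to get (\ref{30}), substitute into (\ref{13-a}) (resp.\ (\ref{13})), convert the Bernoulli polynomials to Bernoulli functions for $1\le j\le c-1$, isolate the $j=c$ term, shift $v$, set $n=\mu c+j$, and absorb the leftover $r!\,k^{1-r}f^{\ast}(0)P_{r}(0,A_{c})$ via (\ref{55}). The bookkeeping you flag as the main obstacle is handled exactly as you describe, so the proposal is correct and matches the paper's proof.
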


\subsection{Reciprocity Theorems}

In this section, we prove two reciprocity theorems. The first one can be
viewed as the reciprocity formula for the function $F\left(  d,c,z;r;A_{c}%
;B_{b}\right)  $ given by
\begin{equation}
F\left(  d,c,z;r;A_{c};B_{b}\right)  =\sum\limits_{m=0}^{r+1}k^{m-1}\left(
-(dz+c)\right)  ^{m-1}s_{r+1-m,m}\left(  c,d;A_{c};B_{b}\right)  . \label{60}%
\end{equation}
The second one is the reciprocity formula for periodic Apostol--Dedekind sum
$s_{r}\left(  d,c;A_{\alpha};B_{\beta}\right)  ,$ defined by%
\begin{equation}
s_{r}\left(  d,c;A_{\alpha};B_{\beta}\right)  =\sum\limits_{n=1}^{ck}f(\alpha
n)P_{1}\left(  \frac{n}{ck}\right)  P_{r}(dn/c,B_{\beta}). \label{a-ds}%
\end{equation}

\begin{theorem}
Let $ad-bc=1$ with $d,c>0$ and $r>1$ be an integer. For\textbf{\ }$a\equiv
d\equiv0\left(  \operatorname{mod}k\right)  $ and $z\in\mathbb{C}-\left\{
0,c/d\right\}  ,$ we have%
\begin{align*}
&  F\left(  d,-c,z;r;A_{c};B_{-b}\right)  -z^{r-1}F\left(  c,d,-\frac{1}%
{z};r;B_{b};A_{c}\right) \\
&  =\sum\limits_{m=0}^{r+1}\left(  -z\right)  ^{m-1}P_{m}\left(
0,A_{-c}\right)  P_{r+1-m}\left(  0,B_{-b}\right)  ,
\end{align*}
where $F\left(  d,c,z;r;A_{c};B_{b}\right)  $ is defined by (\ref{60}).
\end{theorem}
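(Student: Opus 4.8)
The strategy is to establish the identity first for $z$ in a small open set and then to extend it to $z\in\mathbb{C}-\{0,c/d\}$ by analytic continuation, since both sides are Laurent polynomials in $z$ and hence holomorphic on $\mathbb{C}-\{0\}$. I would work with $z$ in the cone $\mathcal{C}=\{\operatorname{Re}(z)>c/d,\ 0<\operatorname{Im}(z)<\varepsilon\}$; for such $z$ one checks that $z$, $-1/z$, $dz-c$ and $(dz-c)/z$ all have small positive argument, so that $(c(-1/z)+d)^{-s}=z^{s}(dz-c)^{-s}$ with the principal branch and all the transformation formulas below apply. Since $r>1$, each function $G(\cdot,s;\cdot,\cdot)$ occurring below is holomorphic at $s=1-r$ (its only possible pole being at $s=1$), while $\Gamma(s)$ has a pole there; thus the $G$'s need not vanish at $s=1-r$, and one must keep the full differences under the limit signs.

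The main point is to produce three instances of the $s=1-r$, $r_{1}=r_{2}=0$ versions of the transformation formulas (\ref{56}) and (\ref{57}) — which hold for an arbitrary pair of period-$k$ sequences in place of $A,B$, Theorem \ref{mainteo} being valid for arbitrary $f,f^{\ast}$. The crucial observation is that the matrix $M=VS$, where $S=\bigl(\begin{smallmatrix}0&-1\\1&0\end{smallmatrix}\bigr)$, equals $\bigl(\begin{smallmatrix}b&-a\\d&-c\end{smallmatrix}\bigr)$; its lower-left entry $d$ is positive and its off-diagonal entries $-a$ and $d$ are $\equiv0\pmod{k}$ by hypothesis, so the \emph{second} formula (\ref{57}) applies to $M$. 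Since $Mz=V(-1/z)$ with automorphy factor $dz-c$, comparison with (\ref{60}) gives
\[
\lim_{s\to1-r}\Gamma(s)\bigl((dz-c)^{-s}G(V(-1/z),s;A,B)-G(z,s;A_{-c},B_{b})\bigr)=2\pi i(-1)^{r-1}F(d,-c,z;r;A_{c};B_{-b}).
\]
Next, applying (\ref{56}) to $V$ itself at the point $-1/z$, substituting $(c(-1/z)+d)^{-s}=z^{s}(dz-c)^{-s}$ and multiplying through by $z^{-s}$ (whose limit as $s\to1-r$ is the nonzero number $z^{r-1}$) yields
\[
\lim_{s\to1-r}\Gamma(s)\bigl((dz-c)^{-s}G(V(-1/z),s;A,B)-z^{-s}G(-1/z,s;B_{-b},A_{-c})\bigr)=2\pi i(-1)^{r-1}z^{r-1}F(c,d,-1/z;r;B_{b};A_{c}).
\]
Finally, applying (\ref{56}) to the matrix $S$ (for which $a\equiv d\equiv0$ trivially) with the sequences $B_{-b}$ and $A_{-c}$ in place of $A$ and $B$ gives
\[
\lim_{s\to1-r}\Gamma(s)\bigl(z^{-s}G(-1/z,s;B_{-b},A_{-c})-G(z,s;A_{-c},B_{b})\bigr)=2\pi i(-1)^{r-1}F(1,0,z;r;A_{c};B_{-b}).
\]

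The expression inside the first limit is the sum of the expressions inside the second and third, so the first constant on the right equals the sum of the other two, i.e.
\[
F(d,-c,z;r;A_{c};B_{-b})-z^{r-1}F(c,d,-1/z;r;B_{b};A_{c})=F(1,0,z;r;A_{c};B_{-b}).
\]
It remains to identify the right-hand side. By (\ref{60}), $F(1,0,z;r;A_{c};B_{-b})=\sum_{m=0}^{r+1}k^{m-1}(-z)^{m-1}s_{r+1-m,m}(0,1;A_{c};B_{-b})$, and by the definition of the periodic Dedekind sum $s_{r+1-m,m}(0,1;A_{c};B_{-b})=P_{r+1-m}(0,B_{-b})\sum_{n=1}^{k}f(cn)P_{m}(n/k)$. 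By periodicity $\sum_{n=1}^{k}f(cn)P_{m}(n/k)=\sum_{v=0}^{k-1}f(cv)P_{m}(v/k)$, which by (\ref{pr**}) applied to $A_{-c}$ at $x=0$ equals $k^{1-m}P_{m}(0,A_{-c})$; hence the powers of $k$ cancel and the right-hand side becomes $\sum_{m=0}^{r+1}(-z)^{m-1}P_{m}(0,A_{-c})P_{r+1-m}(0,B_{-b})$, which is the claimed formula. Since both members of the resulting identity are holomorphic on $\mathbb{C}-\{0\}$ and agree on the open set $\mathcal{C}$, they agree on $\mathbb{C}-\{0,c/d\}$.

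I expect the main obstacle to be the bookkeeping in the second paragraph: first, recognizing that $F(d,-c,z;r;A_{c};B_{-b})$ is forced to come from the \emph{second} transformation formula (\ref{57}), applied to $VS$ — whose congruence condition is the mirror image of the one assumed for $V$ — rather than from a further application of (\ref{56}); and second, handling the branch of $(c(-1/z)+d)^{-s}=z^{s}(dz-c)^{-s}$ and the factor $z^{-s}$ carefully enough that no spurious $\log z$ contribution survives. The latter is precisely why one must keep the full differences under the limit signs and only combine the three identities afterward. Verifying the branch relation on $\mathcal{C}$, the evaluation of $F(1,0,z;r;A_{c};B_{-b})$, and the concluding analytic continuation are routine.
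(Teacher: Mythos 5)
Your proposal is correct and takes essentially the same route as the paper: the identical telescoping decomposition of $\Gamma(s)\bigl((dz-c)^{-s}G(Tz,s;A,B)-G(z,s;A_{-c},B_{b})\bigr)$ into the two limits (\ref{56*}) and (\ref{57*}) plus a middle term coming from the transformation $z\mapsto-1/z$. The only (cosmetic) difference is in that middle term: the paper evaluates it by specializing equation (\ref{34}) to $Vz=-1/z$ and redoing the residue computation via (\ref{30}) and (\ref{55}), whereas you reuse (\ref{56}) applied to the matrix $S$ with the sequences $B_{-b},A_{-c}$ substituted for $A,B$ and then read off $F\left(1,0,z;r;A_{c};B_{-b}\right)$ from the definition (\ref{60}) and (\ref{pr**}) — an equivalent and slightly cleaner packaging of the same computation.
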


\begin{proof}
Replacing $z$ by $-1/z$ in (\ref{56}) gives
\begin{align}
&  \lim_{s\rightarrow1-r}\Gamma(s)z^{s}\left(  \frac{1}{(dz-c)^{s}%
}G(Tz,s;A,B)-z^{-s}G(-\frac{1}{z},s;B_{-b},A_{-c})\right) \label{56*}\\
&  =\left(  -1\right)  ^{r-1}2\pi i\sum\limits_{m=0}^{r+1}k^{m-1}\left(
-\left(  \frac{dz-c}{z}\right)  \right)  ^{m-1}s_{r+1-m,m}\left(
d,c;B_{b};A_{c}\right) \nonumber
\end{align}
and replacing $Vz$ by $Tz=\left(  bz-a\right)  /\left(  dz-c\right)  =V\left(
-1/z\right)  $ in (\ref{57}) gives%
\begin{align}
&  \lim_{s\rightarrow1-r}\Gamma(s)\left(  \frac{1}{(dz-c)^{s}}%
G(Tz,s;A,B)-G(z,s;A_{-c},B_{b})\right) \label{57*}\\
&  =\left(  -1\right)  ^{r-1}2\pi i\sum\limits_{m=0}^{r+1}k^{m-1}\left(
-(dz-c)\right)  ^{m-1}s_{r+1-m,m}\left(  -c,d;A_{c};B_{-b}\right)  .\nonumber
\end{align}
Since%
\begin{align*}
&  \lim_{s\rightarrow1-r}z^{s}\Gamma(s)\left(  \frac{1}{\left(  dz-c\right)
^{s}}G\left(  Tz,s;A,B\right)  -G(z,s;A_{-c},B_{b})\right) \\
&  =\lim_{s\rightarrow1-r}z^{s}\Gamma(s)\left(  \frac{1}{\left(  dz-c\right)
^{s}}G(V\left(  -1/z\right)  ,s;A,B)-\frac{1}{z^{s}}G\left(  -1/z,s;B_{-b}%
,A_{-c}\right)  \right) \\
&  \quad+\lim_{s\rightarrow1-r}z^{s}\Gamma(s)\left(  \frac{1}{z^{s}}G\left(
-1/z,s;B_{-b},A_{-c}\right)  -G(z,s;A_{-c},B_{b})\right)  ,
\end{align*}
it is sufficient to evaluate the following limit in order to prove the
reciprocity formula%
\[
\lim_{s\rightarrow1-r}z^{s}\Gamma(s)\left(  \frac{1}{z^{s}}G\left(
-1/z,s;B_{-b},A_{-c}\right)  -G(z,s;A_{-c},B_{b})\right)  .
\]
For this, taking $Vz=-1/z$ in (\ref{34}) and using (\ref{30}) we have
\begin{align}
&  \lim_{s\rightarrow1-r}\Gamma(s)\left(  z^{-s}G(-1/z,s;B_{-b},A_{-c}%
)-G(z,s;A_{-c},B_{b})\right) \nonumber\\
&  =-2\pi if(0)\left(  -1\right)  ^{r-1}P_{r}(0,B_{-b})\nonumber\\
&  \quad+\frac{2\pi i\left(  -k\right)  ^{r-1}}{\left(  r+1\right)  !}%
\sum\limits_{m=0}^{r+1}\binom{r+1}{m}\left(  -z\right)  ^{m-1}\sum
\limits_{\mu=0}^{k-1}f(c(\mu+1))B_{m}\left(  \frac{\mu+1}{k}\right)
\sum\limits_{v=0}^{k-1}f^{\ast}(bv)B_{r+1-m}\left(  \frac{v}{k}\right)  .
\label{14}%
\end{align}
It is seen from (\ref{pr**}) that
\begin{align*}
&  \sum\limits_{v=0}^{k-1}f^{\ast}(bv)B_{r+1-m}\left(  \frac{v}{k}\right)
\left(  \sum\limits_{\mu=1}^{k-1}f\left(  c\mu\right)  B_{m}\left(  \frac{\mu
}{k}\right)  +f\left(  0\right)  B_{m}\left(  1\right)  \right) \\
&  =(r+1-m)!k^{m-r}P_{r+1-m}\left(  0,B_{-b}\right)  \left(  m!k^{1-m}%
P_{m}\left(  0,A_{-c}\right)  +%
\begin{cases}
f(0), & \text{if }m=1\text{;}\\
0, & \text{otherwise,}%
\end{cases}
\right)  .
\end{align*}
Substituting these in (\ref{14}) and using (\ref{55}) yield%
\begin{align}
&  \lim_{s\rightarrow1-r}\Gamma(s)\left(  \frac{1}{z^{s}}G(-1/z,s;B_{-b}%
,A_{-c})-G(z,s;A_{-c},B_{b})\right) \label{34*}\\
&  =2\pi i\left(  -1\right)  ^{r-1}\sum\limits_{m=0}^{r+1}\left(  -z\right)
^{m-1}P_{m}\left(  0,A_{-c}\right)  P_{r+1-m}\left(  0,B_{-b}\right)
.\nonumber
\end{align}

Thus, combining (\ref{34*}), (\ref{56*}) and (\ref{57*}) gives
\begin{align}
&  F\left(  d,-c,z;r;A_{c};B_{-b}\right)  -z^{r-1}F\left(  c,d,-\frac{1}%
{z};r;B_{b};A_{c}\right) \nonumber\\
&  =\sum\limits_{m=0}^{r+1}\left(  -z\right)  ^{m-1}P_{m}\left(
0,A_{-c}\right)  P_{r+1-m}\left(  0,B_{-b}\right)  , \label{37}%
\end{align}
for $z\in\mathbb{H}$. By analytic continuation,\ (\ref{37}) is valid for
$z\in\mathbb{C}-\left\{  0,c/d\right\}  .$
\end{proof}

Before stating and proving a reciprocity formula for the periodic
Apostol--Dedekind sum $s_{r}\left(  d,c;A_{\alpha};B_{\beta}\right)  $ given
by (\ref{a-ds}), we discuss this sum.\textbf{ }

Assume that $\alpha\equiv0\left(  \operatorname{mod}k\right)  .$ Since
$A_{\alpha}=\left\{  f\left(  0\right)  \right\}  =f\left(  0\right)  I,$ it
follows that%
\begin{align*}
s_{r}\left(  d,c;A_{\alpha};B_{\beta}\right)   &  =f\left(  0\right)
\sum\limits_{n=1}^{ck}P_{1}\left(  \frac{n}{ck}\right)  P_{r}(dn/c,B_{\beta
}),\\
s_{r}\left(  d,c;B_{\beta};A_{\alpha}\right)   &  =f\left(  0\right)
\sum\limits_{n=1}^{ck}f^{\ast}(\beta n)P_{1}\left(  \frac{n}{ck}\right)
P_{r}\left(  \frac{dn}{c}\right)  ,
\end{align*}
which are periodic extensions of Berndt's character Dedekind sums
$S_{2}\left(  d,c;\chi\right)  $ and $S_{1}\left(  d,c;\chi\right)  $ defined
in \cite[Eqs. (6.2) and (6.1)]{7}, respectively.

If $\alpha\equiv\beta\equiv0\left(  \operatorname{mod}k\right)  ,$ then the
sum $s_{r}\left(  d,c;A_{\alpha};B_{\beta}\right)  $ is equal to $f\left(
0\right)  f^{\ast}(0)s_{r}\left(  d,c\right)  ,$ where $s_{r}\left(
d,c\right)  $ is the Apostol--Dedekind sum
\[
s_{r}\left(  d,c\right)  =\sum\limits_{j=0}^{c-1}P_{1}\left(  \frac{j}%
{c}\right)  P_{r}\left(  \frac{dj}{c}\right)  .
\]

In general, writing $v+jk,$ $1\leq v\leq k,$ $0\leq j<c$ in place of $n$ in
the definition of $s_{r}\left(  d,c;A_{\alpha};B_{\beta}\right)  $ and using
(\ref{pr**}), we have%
\begin{align*}
s_{r}\left(  d,c;A_{\alpha};B_{\beta}\right)   &  =\sum\limits_{n=1}%
^{ck}f(\alpha n)P_{1}\left(  \frac{n}{ck}\right)  P_{r}(dn/c,B_{\beta})\\
&  =k^{r-1}\sum\limits_{v=1}^{k}\sum\limits_{j=0}^{c-1}f(\alpha v)P_{1}\left(
\frac{j+\frac{v}{k}}{c}\right)  \sum\limits_{\mu=1}^{k}f^{\ast}(-\beta
\mu)P_{r}\left(  \frac{d\left(  j+\frac{v}{k}\right)  }{c}+\frac{\mu}%
{k}\right)  \\
&  =k^{r-1}\sum\limits_{v=1}^{k}\sum\limits_{\mu=1}^{k}f(\alpha v)f^{\ast
}(-\beta\mu)\sum\limits_{j=0}^{c-1}P_{1}\left(  \frac{j+\frac{v}{k}}%
{c}\right)  P_{r}\left(  \frac{d\left(  j+\frac{v}{k}\right)  }{c}+\frac{\mu
}{k}\right)  .
\end{align*}
Observe that the sum over $j$ is the generalized Dedekind sums due to Carlitz
\cite{20} (or\textbf{ }Takacs \cite{16}) given by
\[
s_{r}\left(  d,c|x,y\right)  =\sum\limits_{j=0}^{c-1}\overline{B}_{1}\left(
\frac{j+y}{c}\right)  \overline{B}_{r}\left(  \frac{d\left(  j+y\right)  }%
{c}+x\right)
\]
where $\overline{B}_{r}\left(  x\right)  =r!P_{r}\left(  x\right)  .$ Then%
\begin{equation}
s_{r}\left(  d,c;A_{\alpha};B_{\beta}\right)  =\frac{k^{r-1}}{r!}%
\sum\limits_{v=1}^{k}\sum\limits_{\mu=1}^{k}f(\alpha v)f^{\ast}(-\beta
\mu)s_{r}\left(  d,c|\frac{\mu}{k},\frac{v}{k}\right)  .\label{1}%
\end{equation}
This shows that we may achieve a reciprocity formula for $s_{r}\left(
d,c;A_{\alpha};B_{\beta}\right)  $ with the help of the following reciprocity
law for $s_{r}\left(  d,c|x,y\right)  :$ Let $c$ and $d$ be positive coprime
integers. For integers $r\geq0$ and for real numbers $x$ and $y$, the
reciprocity formula holds \cite{20,16}%
\begin{align}
&  \left(  r+1\right)  \left[  dc^{r}s_{r}\left(  d,c|x,y\right)  +cd^{r}%
s_{r}\left(  c,d|y,x\right)  \right]  \nonumber\\
&  =\sum\limits_{j=0}^{r+1}\binom{r+1}{j}c^{j}d^{r+1-j}\overline{B}_{j}\left(
x\right)  \overline{B}_{r+1-j}\left(  y\right)  +r\overline{B}_{r+1}\left(
dy+cx\right)  .\label{recip}%
\end{align}

\begin{theorem}
Let $c$ and $d$ be coprime positive integers. For $\alpha,\beta\in\mathbb{Z}$
and $r=0,1,2,...$ the following reciprocity formula holds:%
\begin{align*}
&  dc^{r}s_{r}\left(  d,c;A_{-\alpha};B_{\beta}\right)  +cd^{r}s_{r}\left(
c,d;B_{-\beta};A_{\alpha}\right) \\
&  =\sum\limits_{j=0}^{r+1}c^{j}d^{r+1-j}P_{r+1-j}\left(  0,A_{\alpha}\right)
P_{j}\left(  0,B_{\beta}\right)  +rk^{r-1}\sum\limits_{v=1}^{k}\sum
\limits_{\mu=1}^{k}f(-\alpha v)f^{\ast}(-\beta\mu)P_{r+1}\left(  \frac
{dv+c\mu}{k}\right)  .
\end{align*}

\end{theorem}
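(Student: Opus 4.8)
The plan is to reduce the claimed reciprocity law for $s_r(d,c;A_{\alpha};B_{\beta})$ to the Carlitz--Takacs reciprocity formula \eqref{recip} for the generalized Dedekind sum $s_r(d,c|x,y)$, using the identity \eqref{1} that expresses $s_r(d,c;A_{\alpha};B_{\beta})$ as a finite linear combination of the sums $s_r(d,c|\mu/k,v/k)$ weighted by $f(\alpha v)f^{\ast}(-\beta\mu)$. First I would record \eqref{1} with $\alpha$ replaced by $-\alpha$ and also the companion expression for $s_r(c,d;B_{-\beta};A_{\alpha})$, obtained from \eqref{1} by swapping the roles of $c\leftrightarrow d$, $A_{-\alpha}\leftrightarrow B_{-\beta}$, and $x\leftrightarrow y$; this gives
\[
s_r(d,c;A_{-\alpha};B_{\beta})=\frac{k^{r-1}}{r!}\sum_{v=1}^{k}\sum_{\mu=1}^{k}f(-\alpha v)f^{\ast}(-\beta\mu)\,s_r\!\left(d,c\Big|\frac{\mu}{k},\frac{v}{k}\right),
\]
and similarly $s_r(c,d;B_{-\beta};A_{\alpha})=\tfrac{k^{r-1}}{r!}\sum_{v,\mu}f(-\alpha v)f^{\ast}(-\beta\mu)\,s_r(c,d|v/k,\mu/k)$. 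One has to be a little careful that the indexing variables line up: in \eqref{1} the inner Carlitz sum has first slot $\mu/k$ coming from $B_{\beta}$ and second slot $v/k$ coming from $A_{\alpha}$, so in the swapped version the first slot is $v/k$ (from $A_{\alpha}$) and the second is $\mu/k$ (from $B_{\beta}$) — exactly the $y\leftrightarrow x$ exchange needed to feed into \eqref{recip}.

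Next I would form the combination $dc^r s_r(d,c;A_{-\alpha};B_{\beta})+cd^r s_r(c,d;B_{-\beta};A_{\alpha})$, pull the common factor $\tfrac{k^{r-1}}{r!}\sum_{v,\mu}f(-\alpha v)f^{\ast}(-\beta\mu)$ outside, and inside the double sum apply \eqref{recip} with $x=\mu/k$, $y=v/k$ (noting $dc^r s_r(d,c|x,y)+cd^r s_r(c,d|y,x)=\tfrac{1}{r+1}$ times the right-hand side of \eqref{recip}). This turns the left side of the desired formula into
\[
\frac{k^{r-1}}{(r+1)!}\sum_{v=1}^{k}\sum_{\mu=1}^{k}f(-\alpha v)f^{\ast}(-\beta\mu)\left[\sum_{j=0}^{r+1}\binom{r+1}{j}c^j d^{r+1-j}\overline{B}_j\!\left(\frac{\mu}{k}\right)\overline{B}_{r+1-j}\!\left(\frac{v}{k}\right)+r\,\overline{B}_{r+1}\!\left(\frac{dv+c\mu}{k}\right)\right].
\]
For the second term, since $\tfrac{k^{r-1}}{(r+1)!}\cdot r=\tfrac{rk^{r-1}}{(r+1)!}$ and $\overline{B}_{r+1}(x)=(r+1)!\,P_{r+1}(x)$, the factor $(r+1)!$ cancels and we are left with $r k^{r-1}\sum_{v,\mu}f(-\alpha v)f^{\ast}(-\beta\mu)P_{r+1}((dv+c\mu)/k)$, which is precisely the last term claimed in the theorem.

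It remains to identify the first (binomial) term with $\sum_{j=0}^{r+1}c^j d^{r+1-j}P_{r+1-j}(0,A_{\alpha})P_j(0,B_{\beta})$. Here I would use $\overline{B}_j(x)=j!\,P_j(x)$ together with $\tfrac{k^{r-1}}{(r+1)!}\binom{r+1}{j}j!\,(r+1-j)!=k^{r-1}$, so that after cancellation the term becomes
\[
\sum_{j=0}^{r+1}c^j d^{r+1-j}\left(k^{r-1}\sum_{\mu=1}^{k}f^{\ast}(-\beta\mu)P_j\!\left(\frac{\mu}{k}\right)\right)\left(\sum_{v=1}^{k}f(-\alpha v)P_{r+1-j}\!\left(\frac{v}{k}\right)\right),
\]
and then invoke the definition \eqref{pr**} of the periodic Bernoulli function at $x=0$, namely $P_j(0,B_{\beta})=k^{j-1}\sum_{\mu}f^{\ast}(-\beta\mu)P_j(\mu/k)$ and $P_{r+1-j}(0,A_{\alpha})=k^{r-j}\sum_{v}f(-\alpha v)P_{r+1-j}(v/k)$; the exponents of $k$ combine as $k^{j-1}\cdot k^{r-j}=k^{r-1}$, matching exactly. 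The one genuine subtlety — and the step I expect to require the most care — is bookkeeping of which argument slot of $s_r(d,c|x,y)$ receives $\mu/k$ versus $v/k$ after the $c\leftrightarrow d$ swap, since a sign or variable mismatch there propagates through the entire calculation; once that is pinned down by comparing \eqref{1} and \eqref{pr**} directly, the rest is a routine cancellation of factorials and powers of $k$. The case $r=0$ (where the $r\overline{B}_{r+1}$ term drops) and the boundary behaviour of $P_1$ at integer arguments should be checked separately, but these are minor. This establishes the formula. $\blacksquare$
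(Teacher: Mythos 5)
Your proposal is correct and follows essentially the same route as the paper: express both periodic Apostol--Dedekind sums via (\ref{1}) as weighted combinations of the Carlitz--Takacs sums, apply the reciprocity law (\ref{recip}) termwise with $x=\mu/k$, $y=v/k$, and convert back using (\ref{pr**}). The paper states this in three lines; your version merely fills in the factorial and power-of-$k$ bookkeeping, which checks out.
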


\begin{proof}
From (\ref{1}), we have
\begin{align*}
s_{r}\left(  d,c;A_{-\alpha};B_{\beta}\right)   &  =\frac{k^{r-1}}{r!}%
\sum\limits_{v=1}^{k}\sum\limits_{\mu=1}^{k}f(-\alpha v)f^{\ast}(-\beta
\mu)s_{r}\left(  d,c|\frac{\mu}{k},\frac{v}{k}\right)  ,\\
s_{r}\left(  c,d;B_{-\beta};A_{\alpha}\right)   &  =\frac{k^{r-1}}{r!}%
\sum\limits_{v=1}^{k}\sum\limits_{\mu=1}^{k}f(-\alpha v)f^{\ast}(-\beta
\mu)s_{r}\left(  c,d|\frac{v}{k},\frac{\mu}{k}\right)
\end{align*}
and thus
\begin{align*}
&  \left(  r+1\right)  \left[  dc^{r}s_{r}\left(  d,c;A_{-\alpha};B_{\beta
}\right)  +cd^{r}s_{r}\left(  c,d;B_{-\beta};A_{\alpha}\right)  \right] \\
&  =\frac{k^{r-1}}{r!}\sum\limits_{v=1}^{k}\sum\limits_{\mu=1}^{k}f(-\alpha
v)f^{\ast}(-\beta\mu)\left(  r+1\right)  \left[  dc^{r}s_{r}\left(
d,c|\frac{\mu}{k},\frac{v}{k}\right)  +cd^{r}s_{r}\left(  c,d|\frac{v}%
{k},\frac{\mu}{k}\right)  \right]  .
\end{align*}
Using the reciprocity formula given by (\ref{recip}) and then (\ref{pr**}),
the desired result follows.
\end{proof}

\section{Values of periodic zeta-functions}

This section is devoted to derive periodic analogues of Ramanujan's formula
for $\zeta\left(  2N+1\right)  .$ We accomplish this by applying Theorem
\ref{mainteo} and (\ref{11}), motivated by \cite{7,8}.

For $z\in\mathbb{H}$ and $s$ complex, we recall the special cases of
(\ref{25}) as%
\begin{align*}
A(z,s;A_{\alpha},I;r_{1},r_{2})  &  =\sum\limits_{m>-r_{1}}f(\alpha
m)\sum\limits_{n=1}^{\infty}e\left(  n\frac{(m+r_{1})z+r_{2}}{k}\right)
n^{s-1},\\
A(z,s;I,A_{\beta};r_{1},r_{2})  &  =\sum\limits_{m>-r_{1}}\sum\limits_{n=1}%
^{\infty}f(\beta n)e\left(  n\frac{(m+r_{1})z+r_{2}}{k}\right)  n^{s-1}.
\end{align*}
Let%
\begin{equation}
H\left(  z,s;A_{\alpha},B_{\beta};r_{1},r_{2}\right)  =A\left(  z,s;A_{\alpha
},B_{-\beta};r_{1},r_{2}\right)  +e(s/2)A\left(  z,s;A_{-\alpha},B_{\beta
};-r_{1},-r_{2}\right)  \label{7}%
\end{equation}
and%
\begin{equation}
L_{\pm}\left(  s;A_{\beta};\theta\right)  =L(s;A_{\beta};\theta)+e(\pm
s/2)L(s;A_{-\beta};-\theta) \label{3}%
\end{equation}
where $L(s;A_{\beta};\theta)$ is given by (\ref{12}). In particular, let%
\begin{equation}
Z\left(  s,\theta\right)  =L(s;I;\theta)=\sum\limits_{n>-\theta}\left(
n+\theta\right)  ^{-s},\text{ for }\operatorname{Re}\left(  s\right)  >1\text{
and }\theta\text{ real} \label{6}%
\end{equation}
and%

\[
H\left(  z,s;A,B;r_{1},r_{2}\right)  =H\left(  z,s;A_{1},B_{1};r_{1}%
,r_{2}\right)  \text{ and }A\left(  z,s;A_{\alpha},B_{-\beta}\right)
=A\left(  z,s;A_{\alpha},B_{-\beta};0,0\right)  .
\]

For non-negative integers $j$ and $\mu$ and for $z\in\mathbb{K}$, define%
\begin{align}
&  I^{\ast}(z,s,c,d,r_{1},r_{2})\nonumber\\
&  \ =\int\limits_{C}u^{s-1}\frac{\exp(-\left(  (c\mu+j-\left\{
R_{1}\right\}  )/ck\right)  (cz+d)ku)}{\exp(-ku(cz+d))-1}\frac{\exp(\left\{
(dj+\rho)/c\right\}  u)}{\exp(u)-1}du. \label{int*}%
\end{align}
If $s=-N,$ where $N$ is a non-negative integer, (\ref{int}) and (\ref{int*})
can be calculated by residue theorem%
\begin{align}
&  I(z,-N,c,d,r_{1},r_{2})\label{int1}\\
&  \ =2\pi ik^{N}\sum\limits_{m+n=N+2}B_{m}\left(  \frac{c\mu+j-\left\{
R_{1}\right\}  }{ck}\right)  B_{n}\left(  \frac{v+\left\{  (dj+\rho
)/c\right\}  }{k}\right)  \frac{\left(  -(cz+d)\right)  ^{m-1}}{m!n!}\nonumber
\end{align}
and%
\begin{align}
&  I^{\ast}(z,-N,c,d,r_{1},r_{2})\label{int2}\\
&  \ =2\pi ik^{N}\sum\limits_{m+n=N+2}B_{m}\left(  \frac{c\mu+j-\left\{
R_{1}\right\}  }{ck}\right)  B_{n}\left(  \left\{  (dj+\rho)/c\right\}
\right)  k^{m-1}\frac{\left(  -(cz+d)\right)  ^{m-1}}{m!n!}\nonumber
\end{align}
respectively.

Now we state the transformation formulas involving $H\left(  z,s;A,I;r_{1}%
,r_{2}\right)  $ and $H\left(  z,s;I,A;r_{1},r_{2}\right)  .$

\begin{theorem}
\label{tf}Let $z\in\mathbb{K}$ and suppose that $s$ is an arbitrary complex
number. If $a\equiv0\left(  \operatorname{mod}k\right)  ,$ then%
\begin{align}
&  (cz+d)^{-s}\left(  -2\pi i/k\right)  ^{s}kH\left(  Vz,s;I,\widehat{B}%
;r_{1},r_{2}\right)  +\lambda_{r_{1}}(cz+d)^{-s}\Gamma(s)L_{+}\left(
s;B;r_{2}\right) \label{15}\\
&  =\left(  -2\pi i/k\right)  ^{s}H\left(  z,s;B_{-b},I;R_{1},R_{2}\right)
+\lambda_{R_{1}}f^{\ast}(bR_{1})\Gamma(s)Z_{-}\left(  s,R_{2}\right)
\nonumber\\
&  +e(-s/2)\sum\limits_{j=1}^{c}\sum\limits_{\mu=0}^{k-1}f^{\ast}%
(b(c\mu+j+\left[  R_{1}\right]  ))I^{\ast}(z,s,c,d,r_{1},r_{2}).\nonumber
\end{align}
If $b\equiv0\left(  \operatorname{mod}k\right)  ,$ then%
\begin{align}
&  (cz+d)^{-s}\left(  -2\pi i/k\right)  ^{s}kH\left(  Vz,s;I,\widehat{B}%
;r_{1},r_{2}\right)  +\lambda_{r_{1}}(cz+d)^{-s}\Gamma(s)L_{+}\left(
s,B,r_{2}\right) \label{16}\\
&  =\left(  -2\pi i/k\right)  ^{s}kH\left(  z,s;I,\widehat{B}_{a};R_{1}%
,R_{2}\right)  +\lambda_{R_{1}}\Gamma(s)L_{-}\left(  s;B_{a};R_{2}\right)
\nonumber\\
&  +e(-s/2)\sum\limits_{j=1}^{c}\sum\limits_{\mu=0}^{k-1}\sum\limits_{v=0}%
^{k-1}f^{\ast}(-a(\left[  R_{2}+d(j-\left\{  R_{1}\right\}  )/c\right]
-v+d\mu))I(z,s,c,d,r_{1},r_{2}).\nonumber
\end{align}
If $c\equiv0\left(  \operatorname{mod}k\right)  ,$ then%
\begin{align}
&  (cz+d)^{-s}\left(  -2\pi i/k\right)  ^{s}H\left(  Vz,s;A,I;r_{1}%
,r_{2}\right)  +\lambda_{r_{1}}f\left(  -r_{1}\right)  (cz+d)^{-s}%
\Gamma(s)Z_{+}\left(  s,r_{2}\right) \label{17}\\
&  =\left(  -2\pi i/k\right)  ^{s}H\left(  z,s;A_{d},I;R_{1},R_{2}\right)
+\lambda_{R_{1}}f(-dR_{1})\Gamma(s)Z_{-}\left(  s,R_{2}\right) \nonumber\\
&  +e(-s/2)\sum\limits_{j=1}^{c}\sum\limits_{\mu=0}^{k-1}f(-d(c\mu+j+\left[
R_{1}\right]  ))I^{\ast}(z,s,c,d,r_{1},r_{2}).\nonumber
\end{align}
If $d\equiv0\left(  \operatorname{mod}k\right)  ,$ then%
\begin{align}
&  (cz+d)^{-s}\left(  -2\pi i/k\right)  ^{s}H\left(  Vz,s;A,I;r_{1}%
,r_{2}\right)  +\lambda_{r_{1}}f\left(  -r_{1}\right)  (cz+d)^{-s}%
\Gamma(s)Z_{+}\left(  s,r_{2}\right) \label{18}\\
&  =\left(  -2\pi i/k\right)  ^{s}kH\left(  z,s;I,\widehat{A}_{-c};R_{1}%
,R_{2}\right)  +\lambda_{R_{1}}\Gamma(s)L_{-}\left(  s;A_{-c};R_{2}\right)
\nonumber\\
&  +e(-s/2)\sum\limits_{j=1}^{c}\sum\limits_{\mu=0}^{k-1}\sum\limits_{v=0}%
^{k-1}f(c(\left[  R_{2}+d(j-\left\{  R_{1}\right\}  )/c\right]
-v))I(z,s,c,d,r_{1},r_{2}).\nonumber
\end{align}
Furthermore, if $s=-N$ is non-negative integer, upon the evaluation of
$I(z,s,c,d,r_{1},r_{2})$ and $I^{\ast}(z,s,c,d,r_{1},r_{2})$ by (\ref{int1})
and (\ref{int2}), respectively, (\ref{15})--(\ref{18}) are valid for
$z\in\mathbb{H}$.
\end{theorem}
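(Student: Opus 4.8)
The plan is to specialize the master Theorem~\ref{mainteo} and its companion identity \eqref{11} in four ways, according to which of $a,b,c,d$ is divisible by $k$, and then read off the four transformation formulas \eqref{15}--\eqref{18} by comparing the pieces that survive. First I would take the case $c\equiv 0\,(\operatorname{mod} k)$, which is a sub-case of $a\equiv d\equiv 0$ from Theorem~\ref{mainteo}, and set $B=I$ in \eqref{13-a}. Then $f^{\ast}\equiv 1$, so $B_{\beta}=I$ for every $\beta$, the sum $L(s;A_c;-R_2)$ on the right collapses onto $Z(s,-R_2)$-type terms, and the triple sum over $v$ telescopes because $\sum_{v}f^{\ast}(\cdots)$ degenerates; this is exactly what converts the kernel $I$ into $I^{\ast}$ as defined in \eqref{int*}. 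Applying \eqref{11} to rewrite $\Gamma(s)G(Vz,s;A,I;r_1,r_2)$ and $\Gamma(s)G(z,s;A_d,I;R_1,R_2)$ in terms of the $A$-series, and recalling $H$ from \eqref{7} and $L_{\pm}$, $Z_{\pm}$ from \eqref{3}--\eqref{6}, gives \eqref{17}. The case $a\equiv 0\,(\operatorname{mod} k)$ is obtained the same way but starting from \eqref{13} with $A=I$ and invoking the Fourier-coefficient identity \eqref{47}--\eqref{2}, which is why $\widehat B$ rather than $B$ appears on the transformed side; this produces \eqref{15}. The remaining two, \eqref{16} and \eqref{18}, come symmetrically: \eqref{18} from \eqref{13-a} with $A=I$ (so $d\equiv 0$), \eqref{16} from \eqref{13} with $B=I$ (so $b\equiv 0$), again using \eqref{11} on both $G$-terms and \eqref{47} to pass to $\widehat B_a$, resp.\ $\widehat A_{-c}$.

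For each case the bookkeeping is: (i) substitute the degenerate sequence into \eqref{13-a} or \eqref{13}; (ii) multiply through by nothing extra — the factor $\Gamma(s)$ is already present — and replace every occurrence of $\Gamma(s)G(\cdot)$ using \eqref{11}, noting that the term $\lambda_{r_1}f(-\alpha r_1)\Gamma(s)(L(s;B_\beta;r_2)+e(s/2)L(s;B_{-\beta};-r_2))$ in \eqref{11} is precisely $\lambda_{r_1}f(-\alpha r_1)\Gamma(s)L_+(s;B_\beta;r_2)$; (iii) match the entire-function parts (the $A$-series, hence $H$) on both sides and the $L$-function parts (which carry the $\lambda_{r_1},\lambda_{R_1}$ factors) on both sides; (iv) observe that the common loop-integral contribution is unchanged in form, except that when the inner periodic sequence is $I$ one sums a geometric-type series in $v$ over a full period, turning $\exp((v+\{(dj+\rho)/c\})/k)ku)/(\exp(ku)-1)$ into $\exp(\{(dj+\rho)/c\}u)/(\exp(u)-1)$, i.e.\ $I\rightsquigarrow I^{\ast}$. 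The constants $(-2\pi i/k)^s k$ versus $(-2\pi i/k)^s$ appearing in front of the various $H$'s are tracked directly from the $k$ in front of the $A$-terms in \eqref{11}: a factor $k$ survives whenever \eqref{11} is applied with a nontrivial periodic sequence in the second slot (giving $\widehat B$ or $\widehat A$), and cancels when that slot is $I$.

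Finally, for the last sentence of the theorem — the claim that \eqref{15}--\eqref{18} extend from $z\in\mathbb K$ to $z\in\mathbb H$ when $s=-N$ — I would argue as follows. The only obstruction to enlarging the domain from the quarter-plane $\mathbb K$ to the half-plane $\mathbb H$ is the contour integral $I(z,s,c,d,r_1,r_2)$, whose defining loop $C$ was chosen so that $u=0$ is the unique zero of $(\exp(-ku(cz+d))-1)(\exp(ku)-1)$ inside it, a condition that genuinely uses $\operatorname{Re}(z)>-d/c$. But when $s=-N$ the integrand $u^{s-1}=u^{-N-1}$ is meromorphic at $u=0$, the integral over $C$ reduces by the residue theorem to the finite closed-form sums \eqref{int1}, \eqref{int2}, and these polynomial-in-$(cz+d)$ expressions are manifestly analytic (indeed entire) in $z$ on all of $\mathbb H$. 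Each remaining term in \eqref{15}--\eqref{18} — namely $H(Vz,-N;\cdot)$, $H(z,-N;\cdot)$, $(cz+d)^{N}$, and the $L_\pm,Z_\pm$ values, which at $s=-N$ are constants — is likewise analytic on $\mathbb H$ (the $A$-series in \eqref{25} converges absolutely and locally uniformly for $\operatorname{Im}(z)>0$, and $Vz\in\mathbb H$ whenever $z\in\mathbb H$). Hence both sides of each identity are analytic functions of $z$ on the connected set $\mathbb H$ that agree on the open subset $\mathbb K$, so they agree on all of $\mathbb H$ by the identity theorem. The main obstacle is not this continuation argument but the careful tracking in step (iii)--(iv) above: making sure the $\lambda$-factors, the powers of $k$, and the $\widehat{}$-transforms land on exactly the terms displayed in \eqref{15}--\eqref{18}; the cleanest safeguard is to verify each of the four formulas reduces, on setting the relevant sequence to $I$ throughout, to the corresponding scalar Eisenstein-series transformation formula of Berndt, which serves as a consistency check.
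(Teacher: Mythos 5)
Your central strategy --- obtaining Theorem \ref{tf} by \emph{specializing the statement} of Theorem \ref{mainteo} with $A=I$ or $B=I$ --- cannot work, because the congruence hypotheses of the two theorems are incompatible rather than nested. Formula \eqref{13-a} is proved under $a\equiv d\equiv0\ (\operatorname{mod}k)$; together with $ad-bc=1$ this gives $bc\equiv-1\ (\operatorname{mod}k)$, so $\gcd(b,k)=\gcd(c,k)=1$ and neither $b$ nor $c$ can be divisible by $k$ when $k>1$. Likewise $b\equiv c\equiv0$ forces $\gcd(a,k)=\gcd(d,k)=1$. Hence your opening claim that ``$c\equiv0\ (\operatorname{mod}k)$ is a sub-case of $a\equiv d\equiv0$'' is false --- for $k>1$ the two conditions are mutually exclusive --- and none of the four single-congruence hypotheses of \eqref{15}--\eqref{18} can be reached by quoting \eqref{13-a} or \eqref{13}. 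What the paper actually does is re-run the \emph{method} of proof of Theorem \ref{mainteo} from scratch: substituting $Vz$ into $G(Vz,s;I,B;r_{1},r_{2})$ produces the coefficient $f^{\ast}(Na-Mb)$ with $M=ma+nc$, $N=mb+nd$, and because the other coefficient is identically $1$ a \emph{single} congruence already collapses this to $f^{\ast}(-mb)$ when $a\equiv0$ or to $f^{\ast}(an)$ when $b\equiv0$ (and symmetrically for $G(Vz,s;A,I;r_{1},r_{2})$ with $c\equiv0$ or $d\equiv0$); one then carries the Lipschitz/contour-integral analysis through again and only at the end invokes Lemma \ref{lem} and \eqref{7}--\eqref{6} to package everything in terms of $H$, $L_{\pm}$, $Z_{\pm}$. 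The double congruence in Theorem \ref{mainteo} was needed to reduce \emph{both} coefficient factors; when one of them is trivially $1$, half of that hypothesis becomes superfluous, but this can only be seen inside the proof, not from the statement of the theorem.

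A secondary problem: even within your framework the case assignments are scrambled. Matching the $G$'s that appear on the transformed side, \eqref{15} and \eqref{18} have the shape of \eqref{13-a} (with $A=I$ and $B=I$ respectively), while \eqref{16} and \eqref{17} have the shape of \eqref{13} (with $A=I$ and $B=I$ respectively) --- not the pairings you list, each of which misidentifies either the source formula or the sequence being trivialized. On the positive side, two ingredients of your write-up are correct and do coincide with what the paper needs: the observation that summing the kernel of $I(z,s,c,d,r_{1},r_{2})$ over $v$ through a full period converts it into the kernel of $I^{\ast}(z,s,c,d,r_{1},r_{2})$, and the identity-theorem argument extending \eqref{15}--\eqref{18} from $\mathbb{K}$ to $\mathbb{H}$ at $s=-N$ once the loop integrals are replaced by the residue evaluations \eqref{int1}--\eqref{int2}.
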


\begin{proof}
For $z\in\mathbb{K}$, $\operatorname{Re}\left(  s\right)  >2,$ $M=ma+nc$ and
$N=mb+nd,$ we have
\begin{align*}
&  G(Vz,s;I,B;r_{1},r_{2})\\
&  =\sum\limits_{M,N=-\infty}^{\infty}f^{\ast}(Na-Mb)\left\{  \frac
{((M+R_{1})z+N+R_{2})}{cz+d}\right\}  ^{-s}\\
&  =\sum\limits_{m,n=-\infty}^{\infty}f^{\ast}(-mb)\left\{  \frac
{((m+R_{1})z+n+R_{2})}{cz+d}\right\}  ^{-s},\text{ }a\equiv
0(\operatorname{mod}k),\\
&  =\sum\limits_{m,n=-\infty}^{\infty}f^{\ast}(an)\left\{  \frac
{((m+R_{1})z+n+R_{2})}{cz+d}\right\}  ^{-s},\text{ }b\equiv
0(\operatorname{mod}k).
\end{align*}
To prove (\ref{15}) and (\ref{16}), we follow precisely the method in the
proof of \cite[Theorem 1]{dc} then use Lemma \ref{lem} and Eqs. (\ref{7}%
)--(\ref{6}).\ To prove (\ref{17}) and (\ref{18}), follow the method outlined
above, but begin by examining $G(Vz,s;A,I;r_{1},r_{2})$ instead of
$G(Vz,s;I,B;r_{1},r_{2}).$
\end{proof}

We mention that the character versions of this theorem are given by Berndt
\cite[Theorem 4.1]{8} for primitive characters $f=\chi_{1}$ and $f^{\ast}%
=\chi_{2},$ and in \cite[Theorem 3]{7} for $\chi_{1}=\chi=\overline{\chi}%
_{2}.$

By letting $s=-N$ be a non-positive integer in Theorem 3 of \cite{7}, Berndt
obtain some interesting formulas for Dirichlet\textbf{ }$L$--functions or
curious arithmetical results that are the character analogues of Ramanujan's
formula for $\zeta\left(  2N+1\right)  $ via transformation formulas. Periodic
analogues of these formulas are due to Bradley \cite{brad} and are special
cases of Theorem \ref{tf}.

\begin{theorem}
\label{rf}(Bradley \cite{brad}) Let $N$ be positive integer and let $\gamma$
be an arbitrary positive number.

If $f$ is an even function, then
\begin{align}
&  \zeta\left(  2N+1,\widehat{A}\right)  -f\left(  0\right)  (i\gamma
k^{2})^{-2N}\zeta\left(  2N+1\right) \nonumber\\
&  =2\left(  i\gamma k\right)  ^{-2N}A\left(  ik\gamma,-2N;A,I\right)
-2kA\left(  \frac{i}{k\gamma},-2N;I,\widehat{A}\right) \nonumber\\
&  \quad+\frac{\left(  2\pi i\right)  ^{2N+1}}{k^{2N}}\sum\limits_{m=0}%
^{N+1}P_{2m}\left(  0\right)  P_{2N+2-2m}\left(  0,A\right)  \left(
i/k\gamma\right)  ^{2m-1}, \label{21}%
\end{align}
where $\zeta\left(  s,A\right)  =L\left(  s;A;0\right)  $ is the periodic
zeta-function \cite[Sec. 6]{10}. If $f\left(  0\right)  =0,$ (\ref{21}) is
also valid for $N=0$.

If $f$ is an odd function, then
\begin{align}
\zeta\left(  2N,\widehat{A}\right)   &  =-2\left(  ik\gamma\right)
^{1-2N}A\left(  ik\gamma,1-2N;A,I\right)  +2kA\left(  \frac{i}{k\gamma
},1-2N;I,\widehat{A}\right) \nonumber\\
&  \quad+\frac{\left(  2\pi i\right)  ^{2N}}{k^{2N-1}}\sum\limits_{m=0}%
^{N}P_{2m}\left(  0\right)  P_{2N+1-2m}\left(  0,A\right)  \left(
i/k\gamma\right)  ^{2m-1}. \label{22}%
\end{align}

\end{theorem}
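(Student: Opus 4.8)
The strategy is to specialize the general transformation formula of Theorem~\ref{tf} to the modular transformation $V$ with $a=d=0$, $b=-1$, $c=1$ (so that $Vz=-1/z$), replace $z$ by $ik\gamma$, and set $s=-N$ with $N=2N$ (resp. $N=2N-1$) according to the parity of $f$. Under this choice one has $R_{1}=R_{2}=0$, $\rho=0$, and the quarter-plane condition is satisfied along the positive imaginary axis, so after the residue evaluations \eqref{int1}--\eqref{int2} the formulas are valid for $z\in\mathbb{H}$ as stated at the end of Theorem~\ref{tf}. Since $r_{1}=r_{2}=0$ the terms carrying $\lambda_{r_{1}}$ and $\lambda_{R_{1}}$ all survive with argument $0$, producing the $\zeta(\,\cdot\,,\widehat{A})$, $\zeta(\,\cdot\,)$ and $f(0)$-contributions; here I will use $Z(s,0)=\zeta(s)$, $L(s;A;0)=\zeta(s,A)$, and the definition \eqref{7} of $H$.

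The main computational steps are: first, write out the relevant case of Theorem~\ref{tf} — namely \eqref{18} when $f$ is even (then $d\equiv0$ is automatic since $d=0$, and $\widehat{A}_{-c}=\widehat{A}$, $A_{d}$ collapses to $f(0)I$) and \eqref{17} or \eqref{18} again, but exploiting the parity of $f$ to combine the two $A$-terms inside each $H$ into a single $A(\,\cdot\,;A,I)$ or $A(\,\cdot\,;I,\widehat{A})$ with a factor $2$ (this is where evenness/oddness of $f$ enters: $f$ even forces $A_{-\alpha}=A_{\alpha}$ and $\widehat{A}$ even, $f$ odd forces a sign, and correspondingly $L_{\pm}$ and $Z_{\pm}$ degenerate). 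Second, substitute $z=ik\gamma$ so that $cz+d=ik\gamma$, $Vz=i/(k\gamma)$, and $(-2\pi i/k)^{s}=(-2\pi i/k)^{-N}$; the phase $e(-s/2)=e(N/2)=(-1)^{N}$ (or $i^{N}$, tracked with the stated branch $-\pi\le\arg<\pi$) must be reconciled with the prefactor $(-2\pi i/k)^{-N}$ and with the $(-(cz+d))^{m-1}=(-ik\gamma)^{m-1}$ coming out of \eqref{int1}. Third, evaluate the finite double sum $\sum_{j=1}^{c}\sum_{\mu=0}^{k-1}(\cdots)$: with $c=1$ the $j$-sum is a single term $j=1$, $[R_{1}]=0$, $\{dj/c\}=0$, so $I(z,-N,1,0,0,0)$ reduces via \eqref{int1} to $2\pi i k^{N}\sum_{m+n=N+2}B_{m}(\mu/k)B_{n}(v/k)(-(ik\gamma))^{m-1}/(m!n!)$, and the remaining sums over $\mu,v$ against $f(-\mu)$ (and the implicit $f(-v)$ from $A_{-c}$ in $H(z,s;I,\widehat A_{-c})$) fold into $P_{m}(0,\cdot)$ and $P_{n}(0,\cdot)$ by the definitions \eqref{10-1}, \eqref{pr**} — exactly as in the computation leading to the Definitions in Section~3. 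Parity kills the odd-index Bernoulli terms, leaving only $m=2\ell$, which is why the final sum runs over $P_{2m}(0)P_{2N+2-2m}(0,A)$ (resp. $P_{2m}(0)P_{2N+1-2m}(0,A)$).

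The one genuinely delicate point — and the step I expect to be the main obstacle — is the bookkeeping of the constant terms and branch-of-argument factors so that the powers of $2\pi i$, the factors of $k$, the $i^{N}$ from $e(-s/2)$, and the signs $(-1)^{m-1}$ line up to give precisely $(2\pi i)^{2N+1}/k^{2N}$ times $(i/k\gamma)^{2m-1}$ in \eqref{21} and $(2\pi i)^{2N}/k^{2N-1}$ times $(i/k\gamma)^{2m-1}$ in \eqref{22}; in particular one must check that the $\lambda$-terms on the two sides of \eqref{17}/\eqref{18} — which give $f(0)(ik\gamma)^{2N}\zeta(2N+1)$ after using the functional-equation-type identity $\lim_{s\to -N}\Gamma(s)Z_{\pm}(s,0)$ together with $\Gamma(s)\Gamma(1-s)=\pi/\sin\pi s$ as in \eqref{55} — combine correctly with the $m=0$ and $m=N+1$ endpoint terms of the $I$-sum, rather than being separate contributions. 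For $N=0$ with $f(0)=0$ one simply observes that the offending $\zeta(2N+1)$ term and the would-be-divergent $\Gamma(s)$ pole are both suppressed, so \eqref{21} still holds; the odd case \eqref{22} needs no such caveat because $f$ odd forces $f(0)=0$ and the relevant $\zeta$-value at $1-2N$ is finite. Assembling these pieces and reading off the coefficient of each power of $\gamma$ yields \eqref{21} and \eqref{22}.
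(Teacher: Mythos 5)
Your plan follows the paper's proof essentially verbatim: specialize (\ref{18}) to $Vz=-1/z$ with the variable on the positive imaginary axis, set $s=-2N$ (resp.\ $s=1-2N$), evaluate $I$ by residues via (\ref{int1}) and the multiplication formula (\ref{26}), compute $\lim_{s\to -N}\Gamma(s)L_{-}(s;A_{-c};0)$ and $\lim_{s\to -N}\Gamma(s)Z_{+}(s,0)$ from the functional equations, and discard the odd-indexed Bernoulli terms by parity. Two small corrections to your bookkeeping: the paper takes $z=i/(k\gamma)$ so that $Vz=ik\gamma$ (your assignment is reversed, which amounts only to relabelling the arbitrary $\gamma$), and the $\zeta(2N+1,\widehat{A})$ and $f(0)\zeta(2N+1)$ contributions coming from the $\lambda$-terms stay as separate summands in (\ref{21})--(\ref{22}) rather than merging with the $m=0$ and $m=N+1$ endpoints of the Bernoulli sum as you anticipate.
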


\begin{proof}
Using the functional equation of $\zeta(s,A)$ (\cite[Corollary 6.5]{10}), we
have%
\begin{equation}
\lim_{s\rightarrow-N}\Gamma(s)L_{-}\left(  s;A_{c};0\right)  =e^{-\pi
iN/2}\left(  k/2\pi\right)  ^{N}\zeta\left(  N+1,\widehat{A}_{c}\right)  .
\label{20}%
\end{equation}
Suppose that $f$ is even. Using (\ref{int1}) and (\ref{7}), equation
(\ref{18}) can be written as
\begin{align*}
&  (cz+d)^{N}\left(  -2\pi i/k\right)  ^{-N}\left(  1+e\left(  -N/2\right)
\right)  A\left(  Vz,-N;A,I\right)  +f\left(  0\right)  \lim_{s\rightarrow
-N}(cz+d)^{-s}\Gamma(s)Z_{+}\left(  s,0\right) \\
&  =\left(  -2\pi i/k\right)  ^{-N}k\left(  1+e\left(  -N/2\right)  \right)
A\left(  z,-N;I,\widehat{A}_{-c}\right)  +\lim_{s\rightarrow-N}\Gamma
(s)L_{-}\left(  s;A_{-c};0\right) \\
&  +2\pi i\frac{\left(  -k\right)  ^{N}}{\left(  N+2\right)  !}\sum
\limits_{j=1}^{c}\sum\limits_{\mu=0}^{k-1}\sum\limits_{v=0}^{k-1}%
\sum\limits_{m=0}^{N+2}\binom{N+2}{m}f(c(\left[  dj/c\right]  -v))\\
&  \quad\times B_{m}\left(  \frac{c\mu+j}{ck}\right)  B_{N+2-m}\left(
\frac{v+\left\{  dj/c\right\}  }{k}\right)  \left(  -(cz+d)\right)  ^{m-1}.
\end{align*}
Replace $N$ by $2N,$ let $Vz=-1/z$ and put $z=i/k\gamma,$ $\gamma>0$. From the
facts (\ref{10-1}), (\ref{pr**}) and (\ref{20}), we have%
\begin{align}
&  2\left(  2\pi\gamma\right)  ^{-2N}A\left(  ik\gamma,-2N;A,I\right)
+f\left(  0\right)  (2\pi k\gamma)^{-2N}\zeta\left(  2N+1\right) \nonumber\\
&  =2\left(  2\pi i/k\right)  ^{-2N}kA\left(  \frac{i}{k\gamma}%
,-2N;I,\widehat{A}\right)  +\left(  k/2\pi\right)  ^{2N}\left(  -1\right)
^{N}\zeta\left(  2N+1,\widehat{A}_{-1}\right) \nonumber\\
&  +2\pi i\sum\limits_{m=0}^{2N+2}\left(  -1\right)  ^{m}P_{m}\left(
0\right)  P_{2N+2-m}\left(  0,A\right)  \left(  -i/k\gamma\right)  ^{m-1},
\label{27}%
\end{align}
where we have used that
\[
\sum\limits_{\mu=0}^{k-1}B_{m}\left(  \frac{\mu+1}{k}\right)  =k^{1-m}\left(
-1\right)  ^{m}B_{m}\left(  0\right)  .
\]
Since $P_{2N+2-m}\left(  0,A\right)  P_{m}\left(  0\right)  =0$ for odd $m$
and even $f,$ we can replace $m$ by $2m$ on the right-hand side of (\ref{27}),
which completes the proof of (\ref{21}).

Similarly, if $f$ is an odd function, then replacing $N$ by $2N-1$ and
proceeding as in the proof of (\ref{21}) give%
\begin{align}
&  2\left(  ik\gamma\right)  ^{1-2N}A\left(  ik\gamma,1-2N;A,I\right)
-2kA\left(  \frac{i}{k\gamma},1-2N;I,\widehat{A}\right) \nonumber\\
&  =\zeta\left(  2N,\widehat{A}_{-1}\right)  +\frac{\left(  2\pi i\right)
^{2N}}{k^{2N-1}}\sum\limits_{m=0}^{2N+1}P_{2N+1-m}\left(  0,A\right)
P_{m}\left(  0\right)  \left(  i/k\gamma\right)  ^{m-1}, \label{32}%
\end{align}
which completes the proof of (\ref{22}) by replacing $m$ by $2m$ on the
right-hand side of (\ref{32}).
\end{proof}

Note that Theorem \ref{rf} is just one of an infinite class of such formulas
that can be deduced from Theorem \ref{tf} when $s=-N$ and $r_{1}=r_{2}=0.$
Similar formulas for $\zeta\left(  2N+1,B\right)  $ and $\zeta\left(
2N,B\right)  $ can be obtained from (\ref{15}). Moreover, using the following
relations, (\ref{21}) and (\ref{22}) can be written in terms of
zeta-functions:
\[
B_{2r}=\frac{2\left(  -1\right)  ^{r-1}\left(  2r\right)  !}{\left(
2\pi\right)  ^{2r}}\zeta\left(  2r\right)
\]
and \cite[Eqs. (6.23) and (6.25)]{10}
\begin{equation}
\zeta\left(  r,\widehat{A}\right)  =-\dfrac{1}{2}\dfrac{\left(  2\pi
i/k\right)  ^{r}}{r!}B_{r}\left(  0,A\right)  ,\text{ }r\geq1 \label{28}%
\end{equation}
when $r$ and $f$ have the same parity.

We also note that from Theorem \ref{tf}, the values of $\zeta\left(
N+1,A\right)  $ can be deduced when $N$ and $f$ have the opposite parity. For
$r_{1}=r_{2}=0$ and $s=-N$ in (\ref{7}), we have%
\begin{align*}
H\left(  Vz,-N;A,I;0,0\right)   &  =A\left(  Vz,-N;A,I\right)
+e(-N/2)A\left(  Vz,-N;A_{-1},I\right)  =0,\\
H\left(  z,s;I,\widehat{A}_{-c};0,0\right)   &  =A\left(  z,-N;I,\widehat{A}%
_{c}\right)  +e(-N/2)A\left(  z,-N;I,\widehat{A}_{-c}\right)  =0.
\end{align*}
Thus, using (\ref{int1}), (\ref{pr**}) and (\ref{20}), equation (\ref{18})
turns into%
\[
\zeta\left(  N+1,\widehat{A}_{-1}\right)  -f\left(  0\right)  z^{N}%
\zeta\left(  N+1\right)  =\frac{\left(  2\pi i\right)  ^{N+1}}{\left(
-k\right)  ^{N}}\sum\limits_{m=0}^{N+2}P_{N+2-m}\left(  0,A\right)
P_{m}\left(  0\right)  z^{m-1}%
\]
when $N$ and $f$ have the opposite parity.

For some special cases of $r_{1}$ and $r_{2}$ in Theorem \ref{tf}, we may
achieve the following formulas.

\begin{theorem}
\label{rfH}Let $N$ denote a non-negative integer and $0<R<1.$ Then,%
\begin{align}
&  \left(  -i\gamma k\right)  ^{-N}H\left(  ik\gamma,-N;A,I;-R,0\right)
-kH\left(  \frac{i}{k\gamma},-N;I,\widehat{A}_{-1};0,R\right) \label{19}\\
&  =\varphi\left(  R,0,1+N;\widehat{A}_{-1}\right)  -\frac{\left(  2\pi
i\right)  ^{N+1}}{k^{N}}\sum\limits_{m=0}^{N+2}P_{m}\left(  0\right)
P_{N+2-m}\left(  R,A\right)  \left(  i/k\gamma\right)  ^{m-1}\nonumber
\end{align}
and%
\begin{align}
&  kH\left(  ik\gamma,-N;I,\widehat{B};0,R\right)  +\left(  -1\right)
^{N}k\varphi\left(  -R,0,N+1;\widehat{B}\right) \label{19a}\\
&  =\left(  -ik\gamma\right)  ^{N}H\left(  \frac{i}{k\gamma}%
,-N;B,I;R,0\right)  -\left(  2\pi i\right)  ^{N+1}\sum\limits_{m=0}^{N+2}%
P_{m}\left(  R,B_{-1}\right)  P_{N+2-m}\left(  0\right)  \left(
i/k\gamma\right)  ^{m-N-1},\nonumber
\end{align}
where $\varphi\left(  x,a,s;A\right)  $ is the periodic Lerch function defined
by \cite{10}
\[
\varphi\left(  x,a,s;A\right)  =\sum\limits_{n=0}^{\infty}\ \hspace
{-0.05in}^{^{\prime}}f\left(  n\right)  e^{2\pi inx/k}\left(  n+a\right)
^{-s},\text{ for }\operatorname{Re}\left(  s\right)  >1\text{ and }x,a\text{
real}%
\]
in which the dash $\prime$ indicates that if $a$ is a non-positive integer,
then the term corresponding to $n=-a$ is omitted from the sum.
\end{theorem}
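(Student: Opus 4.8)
The plan is to derive both (\ref{19}) and (\ref{19a}) as specializations of Theorem \ref{tf}, following the same pattern used in the proof of Theorem \ref{rf}, but now keeping the shift parameters $r_{1},r_{2}$ nonzero in order to produce the periodic Lerch function instead of the periodic zeta-function. First I would establish (\ref{19}) by starting from (\ref{18}), which is the transformation formula valid when $d\equiv0\ (\operatorname{mod}k)$ and which relates $H(Vz,s;A,I;r_{1},r_{2})$ to $H(z,s;I,\widehat{A}_{-c};R_{1},R_{2})$. I would specialize the modular matrix to $V z=-1/z$ (so $a=d=0$, $b=-1$, $c=1$, hence $R_{1}=br_{1}+dr_{2}=-r_{1}$, $R_{2}=ar_{1}+cr_{2}=r_{2}$ — wait, with the convention $R_{1}=ar_{1}+cr_{2}$, $R_{2}=br_{1}+dr_{2}$ from Theorem \ref{mainteo}), then choose $r_{1}=R$ with $0<R<1$ and $r_{2}=0$, so that $\{R_{1}\}=R$ and $R_{2}$ is an integer (so $\lambda_{R_{1}}=0$ but $\lambda_{r_{1}}=0$ as well since $0<R<1$), which kills the boundary $L$-term and the $Z$-term. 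Then I would put $s=-N$, evaluate $I^{\ast}(z,-N,1,0,R,0)$ via (\ref{int2}), and set $z=i/(k\gamma)$, $\gamma>0$. The residue sum produces $\sum_{m}P_{m}(0)P_{N+2-m}(R,A)$ up to the powers of $-(cz+d)=-z$ and the prefactor $(2\pi i)^{N+1}/k^{N}$; here one uses the multiplication/Raabe relations (\ref{26}), (\ref{10-1}), (\ref{pr**}) to collapse the triple sum over $j,\mu,v$ into the single Bernoulli-function expression. The left side, after dividing by $(-2\pi i/k)^{-N}$ and using the functional equation (\cite[Corollary 6.5]{10}) in the form analogous to (\ref{20}) but with the shift, yields the combination $(-i\gamma k)^{-N}H(ik\gamma,-N;A,I;-R,0)-kH(\tfrac{i}{k\gamma},-N;I,\widehat{A}_{-1};0,R)$ together with the periodic Lerch term $\varphi(R,0,1+N;\widehat{A}_{-1})$, the latter coming from $\lim_{s\to-N}\Gamma(s)L_{-}(s;A_{-1};R_{2})$ evaluated at the appropriate shift via the functional equation for the shifted periodic zeta-function (which is exactly the periodic Lerch function).

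For (\ref{19a}) I would instead start from (\ref{16}), the transformation formula valid when $b\equiv0\ (\operatorname{mod}k)$, relating $H(Vz,s;I,\widehat{B};r_{1},r_{2})$ to $H(z,s;I,\widehat{B}_{a};R_{1},R_{2})$. Now I specialize to $Vz=-1/z$ again but read off the roles with $b=0$ forced, i.e. take the matrix with $b=0$; the cleanest choice is $a=d=1$... no — for $Vz=-1/z$ one needs $b=-1$, which conflicts with $b\equiv0$. So here I would instead take $V$ to be (up to the sign conventions of the paper) the inverse-type matrix that swaps the roles, e.g. work with the transformation sending $i/(k\gamma)$ to $ik\gamma$ realized through a matrix with $b=0$, $c=-1$ or similar; the correct bookkeeping is to mirror the proof of (\ref{21})/(\ref{22}) but exchanging the $A$ and $B$ (equivalently $\widehat{A}$ and $\widehat{B}$) slots, which is why (\ref{19a}) has $H(\cdot;I,\widehat{B};0,R)$ on one side and $H(\cdot;B,I;R,0)$ on the other, with the exponent shift $(i/k\gamma)^{m-N-1}$ reflecting the opposite placement of the $(cz+d)^{-s}$ factor. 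Concretely, after setting $r_{2}=R$, $r_{1}=0$ (so the shift now sits in the second slot), $s=-N$, $z=i/(k\gamma)$, and using (\ref{int1}) to evaluate $I(z,-N,c,d,0,R)$, the triple sum again collapses via (\ref{pr**}) to $\sum_{m}P_{m}(R,B_{-1})P_{N+2-m}(0)$; the boundary term $\lambda_{R_{1}}\Gamma(s)L_{-}(s;B_{a};R_{2})$ contributes, after the functional equation, the periodic Lerch value $\varphi(-R,0,N+1;\widehat{B})$ with the factor $(-1)^{N}k$.

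The key inputs are therefore: (i) the analytic-continuation evaluations of $\lim_{s\to-N}\Gamma(s)L_{\pm}$ at a real shift, which by the functional equation for the shifted periodic zeta-function are precisely values of the periodic Lerch function $\varphi$ — this is the shifted analogue of (\ref{20}); (ii) the residue computations (\ref{int1}), (\ref{int2}) which are already recorded; and (iii) the multiplication formulas (\ref{26}), (\ref{10-1}), (\ref{pr**}) to reduce the triple sums to single periodic-Bernoulli-function sums, exactly as in the passage from (\ref{31}) to the definition of $s_{r+1-m,m}$. I expect the main obstacle to be purely organizational rather than conceptual: getting the modular matrix, the placement of the shift among $r_{1},r_{2}$, and the resulting $R_{1},R_{2}=\{R\}$-versus-integer split exactly right so that all the $\lambda$-indicator boundary terms except the intended Lerch term vanish, and so that the powers $(i/k\gamma)^{m-1}$ versus $(i/k\gamma)^{m-N-1}$ come out as stated. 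Once the correct specialization is pinned down, the remainder is the same bookkeeping already carried out for Theorem \ref{rf}, and (\ref{19})--(\ref{19a}) follow for $z$ in the relevant quarter-plane and then, by the last sentence of Theorem \ref{tf}, for $z\in\mathbb{H}$, in particular for $z=i/(k\gamma)$ with $\gamma>0$.
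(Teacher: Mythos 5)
Your overall strategy --- specialize Theorem \ref{tf} at $s=-N$ with a nonzero shift, evaluate the contour integrals by (\ref{int1})/(\ref{int2}), collapse the Bernoulli sums via (\ref{26}), (\ref{10-1}), (\ref{pr**}), and extract the Lerch value from a $\lim_{s\rightarrow-N}\Gamma(s)L_{\pm}$ boundary term --- is indeed the paper's strategy. But the bookkeeping you defer as ``purely organizational'' is exactly where the argument breaks, in two places. First, for (\ref{19}): with $Vz=-1/z$ one has $a=d=0$, $b=-1$, $c=1$, so $R_{1}=ar_{1}+cr_{2}=r_{2}$ and $R_{2}=br_{1}+dr_{2}=-r_{1}$. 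The paper takes $r_{2}=0$ and $R:=R_{2}=-r_{1}\in(0,1)$, so that $R_{1}=0$ is an \emph{integer} and $\lambda_{R_{1}}=1$: the boundary term $\lambda_{R_{1}}\Gamma(s)L_{-}\left(s;A_{-c};R_{2}\right)$ in (\ref{18}) \emph{survives}, and it is precisely this term that, via the splitting (\ref{49}) and the functional equation (\ref{41}) of the periodic Lerch function, becomes $\varphi\left(R,0,1+N;\widehat{A}_{-1}\right)$. You instead assert that $\{R_{1}\}=R$, that $R_{2}$ is an integer, and that $\lambda_{R_{1}}=0$ ``kills the boundary $L$-term'' --- and then in the same breath claim the Lerch term comes from that very limit. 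As written this is inconsistent, and if one follows your specialization literally the Lerch term disappears from the identity.

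Second, for (\ref{19a}) you start from (\ref{16}) (the case $b\equiv0\ (\operatorname{mod}k)$), correctly observe that $Vz=-1/z$ forces $b=-1$, and then leave the conflict unresolved with an appeal to an unspecified ``inverse-type matrix.'' The paper's resolution is simply to use (\ref{15}), the case $a\equiv0\ (\operatorname{mod}k)$, which \emph{is} satisfied since $a=0$; with $b=-1$ the right-hand side of (\ref{15}) contains $H\left(z,s;B_{-b},I;R_{1},R_{2}\right)=H\left(z,s;B,I;R,0\right)$, exactly the term appearing in (\ref{19a}). One then takes $r_{1}=0$, $r_{2}=R$, so $R_{1}=R$ is non-integral ($\lambda_{R_{1}}=0$ kills the $Z_{-}$ term) while $\lambda_{r_{1}}=1$ keeps the left-hand boundary term $\Gamma(s)L_{+}\left(s;B;R\right)$, which is evaluated by Hurwitz's formula for $\zeta(s,x)$ --- not by the $L_{-}$ functional equation you cite --- to give $(2\pi i)^{-N}k^{N+1}\varphi\left(-R,0,N+1;\widehat{B}\right)$ as in (\ref{38}). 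Until the correct branch of Theorem \ref{tf} and the correct surviving indicator terms are identified, neither identity is actually derived.
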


\begin{proof}
Let $Vz=-1/z$ and put $z=i/k\gamma,$ $\gamma>0$. Then, $R_{1}=r_{2}$ and
$R_{2}=-r_{1}.$ For the proof of (\ref{19}), put $r_{2}=0$ and $R:=R_{2}$ and
suppose that $0<R<1$ in (\ref{18}). Hence, by aid of (\ref{int1}), we write
\begin{align*}
&  \left(  \frac{i}{k\gamma}\right)  ^{N}\left(  -2\pi i/k\right)
^{-N}H\left(  ik\gamma,-N;A,I;-R,0\right) \\
&  =\left(  -2\pi i/k\right)  ^{-N}kH\left(  \frac{i}{k\gamma}%
,-N;I,\widehat{A}_{-1};0,R\right)  +\lim_{s\rightarrow-N}\Gamma(s)L_{-}\left(
s;A_{-1};R\right) \\
&  +e(N/2)2\pi ik^{N}\sum\limits_{\mu=0}^{k-1}\sum\limits_{v=0}^{k-1}%
\sum\limits_{m=0}^{N+2}f(-v))B_{m}\left(  \frac{\mu+1}{k}\right)
B_{N+2-m}\left(  \frac{v+R}{k}\right)  \frac{\left(  -i/k\gamma\right)
^{m-1}}{m!\left(  N+2-m\right)  !}.
\end{align*}
Using (\ref{26}) and (\ref{10-1}), we have
\begin{align}
&  \left(  -2\pi\gamma\right)  ^{-N}H\left(  ik\gamma,-N;A,I;-R,0\right)
\nonumber\\
&  =\left(  -2\pi i/k\right)  ^{-N}kH\left(  \frac{i}{k\gamma}%
,-N;I,\widehat{A}_{-1};0,R\right)  +\lim_{s\rightarrow-N}\Gamma(s)L_{-}\left(
s;A_{-1};R\right) \label{33}\\
&  +e(N/2)2\pi i\sum\limits_{m=0}^{N+2}\left(  -1\right)  ^{m}P_{m}\left(
0\right)  P_{2N+2-m}\left(  R,A\right)  \left(  -i/k\gamma\right)
^{m-1}.\nonumber
\end{align}
We must evaluate the limit above. For $0<R<1,$ we have%
\begin{align}
L_{-}\left(  s;A_{-1};R\right)   &  =L(s;A_{-1};R)+e^{-\pi is}%
L(s;A;-R)\nonumber\\
&  =e^{-\pi is/2}\left\{  e^{\pi is/2}\sum\limits_{n=0}^{\infty}\frac
{f(-n)}{\left(  n+R\right)  ^{s}}+e^{-\pi is/2}\sum\limits_{n=1}^{\infty}%
\frac{f(n)}{\left(  n-R\right)  ^{s}}\right\} \nonumber\\
&  =e^{-\pi is/2}\left\{  e^{\pi is/2}\sum\limits_{n=0}^{\infty}\frac
{f(-n)}{\left(  n+R\right)  ^{s}}+e^{-\pi is/2}\sum\limits_{n=0}^{\infty}%
\frac{f(n+1)}{\left(  n+1-R\right)  ^{s}}\right\}  . \label{49}%
\end{align}
Now we make use of the functional equation of the periodic Lerch function
\cite[Theorem 6.1]{10}
\begin{align}
&  \varphi\left(  x,a,1-s;\widehat{C}\right)  \text{ }\nonumber\\
&  \ =\left(  k/2\pi\right)  ^{s}\Gamma(s)\left\{  e^{\pi is/2-2\pi
iax/k}\varphi\left(  -a,x,s;C\right)  +e^{-\pi is/2+2\pi ia(1-x)/k}%
\varphi\left(  a,1-x,s;C^{\ast}\right)  \right\}  , \label{41}%
\end{align}
where $C^{\ast}=\left\{  g\left(  -n-1\right)  \right\}  $ for $C=\left\{
g\left(  n\right)  \right\}  .$ So, taking $a=0,$ $x=R,$ $C=A_{-1}=\left\{
f(-n)\right\}  $ and $C^{\ast}=\left\{  f\left(  n+1\right)  \right\}  $ in
(\ref{41}), (\ref{49}) can be written as%
\[
\Gamma(s)L_{-}\left(  s;A_{-1};R\right)  =e^{-\pi is/2}\Gamma(s)\left\{
e^{\pi is/2}\varphi\left(  0,R,s;A_{-1}\right)  +e^{-\pi is/2}\varphi\left(
0,1-R,s;C^{\ast}\right)  \right\}
\]
which implies
\[
\lim_{s\rightarrow-N}\Gamma(s)L_{-}\left(  s;A_{-1};R\right)  =\left(
2\pi/k\right)  ^{-N}e^{\pi iN/2}\varphi\left(  R,0,1+N;\widehat{A}%
_{-1}\right)  .
\]
Substituting this in (\ref{33}), the desired result follows.

For the proof of (\ref{19a}), we put $R_{2}=r_{1}=0$ and $0<R:=R_{1}=r_{2}<1$
in (\ref{15}). Firstly, we evaluate the limit $\lim_{s\rightarrow-N}%
\Gamma(s)L_{+}\left(  s;B;R\right)  .$

Using (\ref{3}) and replacing $n\rightarrow nk+v$, we have%
\begin{align*}
\Gamma(s)L_{+}\left(  s;B;R\right)   &  =\Gamma(s)\left\{  L(s;B;R)+e^{\pi
is}L(s;B_{-1};-R)\right\} \\
&  =\frac{1}{k^{s}}\sum\limits_{v=0}^{k-1}f^{\ast}(v)\Gamma(s)\left\{
\zeta\left(  s,\frac{v+R}{k}\right)  +e^{\pi is}\zeta\left(  s,1-\frac{v+R}%
{k}\right)  \right\}  .
\end{align*}
Now, by Hurwitz's formula for $\zeta\left(  s,x\right)  $ for
$\operatorname{Re}s<0$ \cite[p. 269]{ww} (or see \cite[Eq. 3.5]{8} for detail)
the right hand-side becomes%
\begin{align*}
&  \frac{\left(  2\pi i\right)  ^{s}}{k^{s}}\sum\limits_{v=0}^{k-1}f^{\ast
}(v)\varphi\left(  -\frac{v+R}{k},1-s\right) \\
&  =\frac{\left(  2\pi i\right)  ^{s}}{k^{s}}\sum\limits_{n=1}^{\infty}%
\frac{e^{-2\pi inR/k}}{n^{1-s}}\sum\limits_{v=0}^{k-1}f^{\ast}(v)e^{-2\pi
inv/k}\\
&  =\frac{\left(  2\pi i\right)  ^{s}}{k^{s-1}}\sum\limits_{n=1}^{\infty
}\widehat{f^{\ast}}(n)\frac{e^{-2\pi inR/k}}{n^{1-s}}=\frac{\left(  2\pi
i\right)  ^{s}}{k^{s-1}}\varphi\left(  -R,0,1-s;\widehat{B}\right)  .
\end{align*}
Namely,%
\begin{equation}
\lim_{s\rightarrow-N}\Gamma(s)L_{+}\left(  s;B;R\right)  =\frac{\left(  2\pi
i\right)  ^{-N}}{k^{-N-1}}\varphi\left(  -R,0,N+1;\widehat{B}\right)  .
\label{38}%
\end{equation}
Using (\ref{int2}), we write%
\begin{align}
&  \sum\limits_{\mu=0}^{k-1}f^{\ast}(b(\mu+1+\left[  R_{1}\right]  ))I^{\ast
}(z,s,1,0,0,R)\nonumber\\
&  =2\pi ik^{N}\sum\limits_{m=0}^{N+2}B_{N+2-m}\left(  0\right)  k^{m-1}%
\frac{(-z)^{m-1}}{m!\left(  N+2-m\right)  !}\sum\limits_{\mu=0}^{k-1}f^{\ast
}(-(\mu+1))B_{m}\left(  \frac{\mu+1-R}{k}\right) \nonumber\\
&  =2\pi ik^{N}\sum\limits_{m=0}^{N+2}\left(  -1\right)  ^{m}P_{m}\left(
R,B_{-1}\right)  P_{N+2-m}\left(  0\right)  (-z)^{m-1} \label{39}%
\end{align}
where we have used that $B_{m}\left(  1-x\right)  =\left(  -1\right)
^{m}B_{m}\left(  x\right)  $ and (\ref{pr**}). Substituting (\ref{38}) and
(\ref{39}) in (\ref{15}) and simplifying give (\ref{19a}).
\end{proof}

Note that if we take $A=I,$ the statement given by (\ref{19a}) in Theorem
\ref{rfH} reduces to \cite[Theorem 3.1]{8}. Also Theorem 3.3 of \cite{8} can
be obtained from (\ref{19}) by observing that
\begin{align*}
H\left(  z,-N;I,I;-R,0\right)   &  =A\left(  z,-N;I,I;-R,0\right)
+e(-N/2)A\left(  z,-N;I,I;R,0\right) \\
&  =\sum\limits_{n=1}^{\infty}\frac{n^{-N-1}}{e^{-2\pi inz}-1}\left(  e^{-2\pi
inzR}+\left(  -1\right)  ^{N}e^{2\pi inzR}\right)  +\sum\limits_{n=1}^{\infty
}\frac{e^{2\pi inzR}}{n^{N+1}},\\
H\left(  z,-N;I,I;0,R\right)   &  =\sum\limits_{n=1}^{\infty}\frac{n^{-N-1}%
}{e^{-2\pi inz}-1}\left(  e^{2\pi inR}+\left(  -1\right)  ^{N}e^{-2\pi
inR}\right)  .
\end{align*}

Now, we assume that $f$ is odd or even (and of course $\widehat{f}$ ). For
convenience, put $f\left(  -m\right)  =\delta f\left(  m\right)  $, where
$\delta=1$ if $f$ is even and $\delta=-1$ if $f$ is odd. From (\ref{7}), with
(\ref{2}), we have%
\begin{align*}
&  H\left(  z,-N;A,I;-R,0\right) \\
&  =A\left(  z,-N;A,I;-R,0\right)  +\left(  -1\right)  ^{N}A\left(
z,-N;A_{-1},I;R,0\right) \\
&  =\sum\limits_{n=1}^{\infty}n^{-N-1}\sum\limits_{m=1}^{\infty}f\left(
m\right)  e^{2\pi inmz/k}\left(  e^{-2\pi inRz/k}+\left(  -1\right)
^{N}\delta e^{2\pi inRz/k}\right)  +f\left(  0\right)  \left(  -1\right)
^{N}\sum\limits_{n=1}^{\infty}n^{-N-1}e^{2\pi inRz/k}\\
&  =\sum\limits_{n=1}^{\infty}\sum\limits_{v=0}^{k-1}\frac{\widehat{f}\left(
v\right)  n^{-N-1}}{e^{-2\pi i\left(  v+nz\right)  /k}-1}\left(  e^{-2\pi
inRz/k}+\left(  -1\right)  ^{N}\delta e^{2\pi inRz/k}\right)  +f\left(
0\right)  \left(  -1\right)  ^{N}\sum\limits_{n=1}^{\infty}\frac{e^{2\pi
inRz/k}}{n^{N+1}}%
\end{align*}
and
\begin{align*}
H\left(  z,-N;I,\widehat{A}_{-1};0,R\right)   &  =\sum\limits_{n=1}^{\infty
}\widehat{f}\left(  n\right)  n^{-N-1}\sum\limits_{m=1}^{\infty}e^{2\pi
inmz/k}\left(  e^{2\pi inR/k}+\left(  -1\right)  ^{N}\delta e^{-2\pi
inR/k}\right) \\
&  =\sum\limits_{n=1}^{\infty}\widehat{f}\left(  n\right)  \frac{n^{-N-1}%
}{e^{-2\pi inz/k}-1}\left(  e^{2\pi inR/k}+\left(  -1\right)  ^{N}\delta
e^{-2\pi inR/k}\right)  .
\end{align*}
Thus, substituting these in (\ref{19}) we have
\begin{align}
&  2\left(  -i\gamma k\right)  ^{-N}\sum\limits_{n=1}^{\infty}\sum
\limits_{v=0}^{k-1}\widehat{f}\left(  v\right)  \frac{\cosh\left(  2\pi
nR\gamma\right)  }{e^{-2\pi i\left(  v+nik\gamma\right)  /k}-1}n^{-N-1}%
+f\left(  0\right)  \left(  i\gamma k\right)  ^{-N}\sum\limits_{n=1}^{\infty
}\frac{e^{-2\pi nR\gamma}}{n^{N+1}}\nonumber\\
&  \ =\delta\sum\limits_{n=1}^{\infty}\widehat{f}\left(  n\right)
\frac{e^{2\pi inR/k}}{n^{N+1}}-2k\sum\limits_{n=1}^{\infty}\widehat{f}\left(
n\right)  \frac{\cos\left(  2\pi nR/k\right)  }{e^{2\pi n/k^{2}\gamma}%
-1}n^{-N-1}\nonumber\\
&  \quad-\frac{\left(  2\pi i\right)  ^{N+1}}{k^{N}}\sum\limits_{m=0}%
^{N+2}P_{m}\left(  0\right)  P_{N+2-m}\left(  R,A\right)  \left(
i/k\gamma\right)  ^{m-1} \label{42}%
\end{align}
if $\left(  -1\right)  ^{N}\delta=1$ and
\begin{align}
&  2\left(  -i\gamma k\right)  ^{-N}\sum\limits_{n=1}^{\infty}\sum
\limits_{v=0}^{k-1}\widehat{f}\left(  v\right)  \frac{\sinh\left(  2\pi
nR\gamma\right)  }{e^{-2\pi i\left(  v+nz\right)  /k}-1}n^{-N-1}+f\left(
0\right)  \left(  i\gamma k\right)  ^{-N}\sum\limits_{n=1}^{\infty}%
\frac{e^{-2\pi nR\gamma}}{n^{N+1}}\nonumber\\
&  \ =\delta\sum\limits_{n=1}^{\infty}\widehat{f}\left(  n\right)
\frac{e^{2\pi inR/k}}{n^{N+1}}-2ik\sum\limits_{n=1}^{\infty}\widehat{f}\left(
n\right)  \frac{\sin\left(  2\pi nR/k\right)  }{e^{2\pi n/k^{2}\gamma}%
-1}n^{-N-1}\nonumber\\
&  \quad-\frac{\left(  2\pi i\right)  ^{N+1}}{k^{N}}\sum\limits_{m=0}%
^{N+2}P_{m}\left(  0\right)  P_{N+2-m}\left(  R,A\right)  \left(
i/k\gamma\right)  ^{m-1} \label{43}%
\end{align}
if $\left(  -1\right)  ^{N}\delta=-1$.

Formulas (\ref{42}) and (\ref{43}) can be specialized as in Theorem \ref{rf}
and are periodic analogues of \cite[Theorem 3.3]{8}. Letting $r$ tend to $0$
in (\ref{42}) and (\ref{43}), we get Bradley's formulas (\ref{21}) and
(\ref{22}), respectively.

We conclude the study with the following remark that mention from the
relationship between periodic zeta function and Dedekind sums.

\begin{remark}
By comparing the definitions of $P_{r}\left(  0,A_{d}\right)  $ and Dedekind
sum, these are closely related for\textbf{ }$A=\left\{  f\left(  n\right)
\right\}  =\left\{  P_{q}\left(  n/k\right)  \right\}  .$ More precisely,
\[
P_{r}\left(  0,A_{d}\right)  =k^{r-1}\sum\limits_{m=0}^{k-1}P_{q}\left(
-\frac{dm}{k}\right)  P_{r}\left(  \frac{m}{k}\right)  =k^{r-1}s_{q,r}\left(
-d,k\right)  ,
\]
where $s_{q,r}\left(  d,k\right)  $ denotes the higher order (inhomogeneous)
Dedekind sum.

Combining the last identity and (\ref{28}) entails for $r\geq1$ and $q\geq2$
that%
\begin{equation}
\zeta\left(  r,\widehat{A}_{d}\right)  =\left(  -1\right)  ^{q+1}%
\dfrac{\left(  2\pi i\right)  ^{r}}{2k}s_{q,r}\left(  d,k\right)  , \label{48}%
\end{equation}
when $r$ and $q$ have the same parity.

Because of this relationship, we evaluate $\widehat{A}=\left\{  \widehat{f}%
(n)\right\}  $ when $A=\left\{  f\left(  n\right)  \right\}  =\left\{
P_{q}\left(  n/k\right)  \right\}  .$ From (\ref{47}), we write
\begin{align*}
\widehat{f}(n)  &  =\dfrac{1}{k}\sum\limits_{v=0}^{k-1}P_{q}\left(  \frac
{v}{k}\right)  e^{-2\pi inv/k}=\dfrac{1}{k^{q}}k^{q-1}\sum\limits_{v=0}%
^{k-1}e^{-2\pi inv/k}P_{q}\left(  \frac{v}{k}\right) \\
&  =\dfrac{1}{k^{q}}P_{q}\left(  0,C_{n}\right)  ,\text{ }%
\end{align*}
where $C=\left\{  g\left(  j\right)  \right\}  =\left\{  e^{2\pi
ij/k}\right\}  $ and $C_{n}=\left\{  g\left(  nj\right)  \right\}  .$ From
\cite[Corollary 6.4]{10}, we have%
\[
\zeta\left(  1-q,C_{n}\right)  =\left(  -1\right)  ^{q-1}\left(  q-1\right)
!P_{q}\left(  0,C_{n}\right)  ,\text{ }q\geq2.
\]
Since
\begin{align*}
\zeta\left(  s,C_{n}\right)   &  =\sum\limits_{m=1}^{\infty}\frac{e^{2\pi
im\frac{n}{k}}}{m^{s}}=\varphi\left(  \frac{n}{k},s\right)  ,\\
\varphi\left(  x,1-q\right)   &  =-\frac{\beta_{q}\left(  e^{2\pi ix}\right)
}{q},\text{ }q\geq1\text{ \ (\cite{1a})}%
\end{align*}
where $\varphi\left(  x,s\right)  =\varphi\left(  x,0,s;I\right)  $ is the
Lerch function and $\beta_{q}\left(  \alpha\right)  $ is the Apostol-Bernoulli
numbers, we have
\begin{align*}
\widehat{f}(n)  &  =\dfrac{1}{k^{q}}P_{q}\left(  0,C_{n}\right) \\
&  =\dfrac{\left(  -1\right)  ^{q-1}}{\left(  q-1\right)  !k^{q}}\zeta\left(
1-q,C_{n}\right)  ,\text{ }q\geq2\\
&  =\dfrac{\left(  -1\right)  ^{q-1}}{\left(  q-1\right)  !k^{q}}\phi\left(
\frac{n}{k},1-q\right)  ,\text{ }q\geq2\\
&  =\dfrac{\left(  -1\right)  ^{q}}{q!k^{q}}\beta_{q}\left(  e^{2\pi i\frac
{n}{k}}\right)  ,\text{ }q\geq2.
\end{align*}
Hence, for coprime integers $d$ and $k,$ this and (\ref{48}) give
\[
\zeta\left(  r,\widehat{A}_{d}\right)  =\dfrac{\left(  -1\right)  ^{q}%
}{q!k^{q}}\sum\limits_{m=1}^{\infty}\frac{\beta_{q}\left(  e^{2\pi i\frac
{m}{k}d}\right)  }{m^{r}}=\left(  -1\right)  ^{q+1}\dfrac{\left(  2\pi
i\right)  ^{r}}{2k}s_{q,r}\left(  d,k\right)
\]
when $r\geq1$ and $q\geq2$ have the same parity.
\end{remark}

\end{document}